\documentclass{amsart}

\usepackage{amsmath,amsthm,amssymb}
\usepackage{hyperref}
\usepackage{cleveref}
\usepackage{todonotes}
\usepackage[all]{xy}
\usepackage{tikz-cd}

\newtheorem{thm}{Theorem}[section]
\newtheorem{prop}[thm]{Proposition}
\newtheorem{lemma}[thm]{Lemma}
\newtheorem{cor}[thm]{Corollary}
\newtheorem{rmk}[thm]{Remark}

\theoremstyle{definition}
\newtheorem{example}[thm]{Example}
\newtheorem{defn}[thm]{Definition}

\newcommand{\pbcorner}{\ar[dr, phantom, very near start, "\lrcorner"]}

\usepackage{xurl}

\newcommand{\CC}{\mathbb{C}}
\newcommand{\U}{\mathcal{U}}
\newcommand{\Udot}{\dot{\mathcal{U}}}

\mathchardef\mhyphen="2D

\begin{document}
\title{A categorical formulation of Kraus' paradox}
\author{Andrew W Swan}
\thanks{This material is based upon work supported by the Air Force Office of Scientific Research under award number FA9550-21-1-0024.}
\email{wakelin.swan@gmail.com}
\maketitle
\begin{abstract}
  We give a categorical formulation of Kraus' ``magic trick'' for recovering information from truncated types. Rather than type theory, we work in Van den Berg--Moerdijk path categories with a univalent universe, and rather than propositional truncation we work with arbitrary cofibrations, which includes truncation as a special case. We show, using Kraus' argument that any cofibration with homogeneous domain is a monomorphism. We give some simple concrete examples in groupoids to illustrate the interaction between homogeneous types, cofibrations and univalent fibrations.
\end{abstract}

\section{Introduction}
\label{sec:introduction}

Propositional truncation is one of the most basic and most important examples of higher induction types \cite[Section 3.7]{hottbook}. An \emph{hProposition} is a type \(A\) such that any two elements of \(A\) are equal. We then define the propositional truncation of a type \(A\) to be a type \(\| A \|\) together with a map \(|-| : A \to \| A \|\) which makes \(A\) into an hProposition in the ``minimal way,'' in the sense that it satisfies a suitable induction principle. In \cite[Section 8.4]{kectanonex} Kraus et al. gave a surprising proof that under certain conditions we can take a term of the form \(|a|\) for \(a : A\) and ``extract'' the term \(a\) back out. More formally, we have a family of dependent types \(z : \| A \| \vdash B(z)\) and a dependent function \(z : \|A \| \vdash b(z) : B(z)\) such that for \(a : A\), \(B(|a|)\) is definitionally equal to \(A\) and \(b(|a|)\) is definitionally equal to \(a\). This is surprising, since by definition any two elements of \(\| A \|\) are equal, so we wouldn't expect to be able to distinguish different elements of \(\|A\|\) to the extent that we can even find which element of \(A\) they came from.

In both the original presentation of \cite{kectanonex} and in our summary above, Kraus' paradox is described in purely type theoretic terms, using terminology such as ``terms,'' ``induction,'' ``definitional equality,'' and ``computation rules.'' This might mislead one into thinking that Kraus' paradox is an artefact of the technical aspects of type theory and has no deeper meaning or significance outside type theory. We remedy this with a purely categorical presentation of Kraus' argument. We work in the general setting of univalent Van den Berg--Moerdijk path categories \cite{vdbergmoerdijkpathcat}, and show that any cofibration with homogeneous domain is a monomorphism.

\subsection*{Acknowledgements} I am grateful for useful comments, suggestions and discussions with Benno van den Berg, Ulrik Buchholtz, Mart\'{i}n Escard\'{o}, Egbert Rijke and David W\"{a}rn. In particular this paper was originally motivated by considerations around the variant of Kraus' paradox due to W\"{a}rn that we will discuss in the conclusion. I am also grateful for the useful corrections and improvements suggested by the anonymous referees.

\section{Path categories}
\label{sec:brown-fibr-categ}

We work in the general setting of path categories, as defined below.
\begin{defn}[Van den Berg--Moerdijk \cite{vdbergmoerdijkpathcat}]
  A \emph{path category} is a category~\(\CC\) together with two classes \(\mathcal{F}\) and \(\mathcal{W}\) of morphisms of \(\CC\) whose elements we respectively refer to as \emph{fibrations} and \emph{weak equivalences}, satisfying the following conditions. Below and throughout the paper, we will refer to maps in the intersection of \(\mathcal{F}\) and \(\mathcal{W}\) as \emph{trivial fibrations}.
  \begin{enumerate}
  \item Fibrations are closed under composition.
  \item The pullback of a fibration along any map exists and is also a fibration.
  \item The pullback of a trivial fibration along any map is a trivial fibration.
  \item Weak equivalences satisfy 2-out-of-6: for all maps \(f : A \to B\), \(g : B \to C\) and \(h : C \to D\), if both \(g \circ f\) and \(h \circ g\) are weak equivalences, then so are \(f\), \(g\), \(h\) and \(h \circ g \circ f\).
  \item Every isomorphism is a trivial fibration.
  \item Every trivial fibration has a section.
  \item \(\CC\) has a terminal object, and for every object \(B\), the unique map \(B \to 1\) is a fibration.
  \item For any object \(B\), the diagonal map \(B \to B \times B\) factors as \(B \stackrel{r}{\to} P B \stackrel{p}{\to} B \times B\) where \(r\) is a weak equivalence and \(p\) is a fibration. We refer to such a factorisation as a \emph{path object} on \(B\).
  \end{enumerate}
\end{defn}

Given a path object \(B \stackrel{r}{\to} P B \stackrel{p}{\to} B \times B\), we will write \(p_{0}\) for \(\pi_{0} \circ p\) and \(p_{1}\) for \(\pi_{1} \circ p\). Note that we get the 3-for-2 property as a special case of the 2-out-of-6 property, i.e. for maps \(f : A \to B\) and \(g : B \to C\), if any two of the maps \(f\), \(g\) and \(g \circ f\) is a weak equivalence, then so is the third.

\begin{defn}
  \label{defn:homotopieswellbehaved}
  Let \(A, B \in \CC\), and let \(P B\) be a path object on \(B\). For maps \(f, g : A \to B\), a \emph{homotopy} from \(f\) to \(g\) is a map \(H : A \to P B\) making the following diagram commute.
  \begin{equation*}
    \begin{tikzcd}
      & & B \\
      A \rar["h"] \ar[urr, bend left, "f"] \ar[drr, bend right, swap, "g"] & PB \ar[ur, "p_{0}"] \ar[dr, swap, "p_{1}"] & \\
      & & B
    \end{tikzcd}
  \end{equation*}
  We say \(f\) and \(g\) are \emph{homotopic} and write \(f \sim g\) if there is a homotopy from \(f\) to \(g\).
\end{defn}

We can think of the definition of path category as the minimal structure on a category necessary for the above notion of homotopy to be well behaved, in the sense that we have the theorem below.
\begin{thm}
  \label{thm:calcfrac}
  \leavevmode
  \begin{enumerate}
  \item The definition of homotopy is independent of the particular choice of path object on \(B\).
  \item \label{it:calcfrac:eqreln} Homotopy gives an equivalence relation on the set of maps \(A \to B\).
  \item \label{it:calcfrac:intr} If \(f \sim g\), then \(f \circ h \sim g \circ h\) and \(k \circ f \sim k \circ g\).
  \end{enumerate}
\end{thm}

\begin{proof}
  See \cite[Theorem 2.14]{vdbergmoerdijkpathcat}.
\end{proof}

\begin{example}
  \label{ex:extensionalfibcat}
  Let \(\CC\) be any category with finite limits. Then we can make \(\CC\) a path category by taking all maps to be fibrations, and the weak equivalences to be precisely the class of isomorphisms. We will refer to path categories of this form as \emph{extensional}.
\end{example}
\begin{prop}
  The following are equivalent:
  \begin{enumerate}
  \item \label{it:extn}\(\CC\) is extensional.
  \item \label{it:hmptytriv}Two maps are homotopic iff they are equal.
  \item \label{it:allweiso}Every weak equivalence is an isomorphism.
  \end{enumerate}
\end{prop}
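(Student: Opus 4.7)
The plan is to prove the cycle (\ref{it:extn}) $\Rightarrow$ (\ref{it:hmptytriv}) $\Rightarrow$ (\ref{it:allweiso}) $\Rightarrow$ (\ref{it:extn}). The first implication is the easiest: I would use Theorem \ref{thm:calcfrac}(1) to test (\ref{it:hmptytriv}) against a cheap path object on $B$. In an extensional path category $\Delta_B \colon B \to B \times B$ is a fibration (every map is) and $\mathrm{id}_B$ is a weak equivalence (it is an isomorphism, hence a trivial fibration by axiom 5), so $B \xrightarrow{\mathrm{id}_B} B \xrightarrow{\Delta_B} B \times B$ is a valid path object. For this choice $p_0 = p_1 = \mathrm{id}_B$, and any homotopy $H$ from $f$ to $g$ then satisfies $f = p_0 \circ H = H = p_1 \circ H = g$.

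For (\ref{it:hmptytriv}) $\Rightarrow$ (\ref{it:allweiso}) I would invoke the standard result from \cite{vdbergmoerdijkpathcat} that every weak equivalence in a path category admits a homotopy inverse; under (\ref{it:hmptytriv}) such a homotopy inverse is a strict two-sided inverse, so the weak equivalence is an isomorphism. For (\ref{it:allweiso}) $\Rightarrow$ (\ref{it:extn}), note that (\ref{it:allweiso}) combined with axiom 5 already gives that weak equivalences coincide with isomorphisms, so it only remains to show that every map is a fibration. Here I would appeal to the standard factorization of an arbitrary map in a path category as a weak equivalence followed by a fibration, again from \cite{vdbergmoerdijkpathcat}: writing $f = p \circ w$ with $w$ a weak equivalence and $p$ a fibration, (\ref{it:allweiso}) makes $w$ an isomorphism, hence a trivial fibration by axiom 5, so that $f$ is a composite of two fibrations.

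The main obstacle is that the two background facts I rely on --- existence of homotopy inverses for weak equivalences, and the weak equivalence/fibration factorization --- are not recalled in the excerpt. Both are however part of the basic development of path categories in \cite{vdbergmoerdijkpathcat}, so I would cite them rather than reprove them. Given those, each implication is short, with the common thread being that axiom 5 lets one pass isomorphisms freely between the trivial-fibration and weak-equivalence sides of the structure.
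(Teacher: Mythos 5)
Your proof is correct and follows essentially the same route as the paper: the degenerate path object $PB = B$ for (\ref{it:extn})$\Rightarrow$(\ref{it:hmptytriv}), the fact that weak equivalences are homotopy equivalences (\cite[Theorem 2.16]{vdbergmoerdijkpathcat}) for (\ref{it:hmptytriv})$\Rightarrow$(\ref{it:allweiso}), and the (weak equivalence, fibration) factorisation (\cite[Proposition 2.3]{vdbergmoerdijkpathcat}) for (\ref{it:allweiso})$\Rightarrow$(\ref{it:extn}). The two background facts you flag as needed are exactly the ones the paper cites at the same points.
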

\begin{proof}
  For \ref{it:extn} implies \ref{it:hmptytriv}, note that if every map is a fibration, then in particular for any object \(B\), the diagonal map \(B \to B \times B\) is a fibration. It follows that taking \(P B = B\) gives a path object on \(B\). However, this implies that for any object \(A\), if two maps \(f, g : A \to B\) are homotopic, then they are equal.

  For \ref{it:hmptytriv} implies \ref{it:allweiso}, we recall that any weak equivalence is a homotopy equivalence by \cite[Theorem 2.16]{vdbergmoerdijkpathcat}, and so by \ref{it:hmptytriv} any homotopy equivalence is an isomorphism.

  For \ref{it:allweiso} implies \ref{it:extn}, we recall that any map \(f : A \to B\) factors as a weak equivalence followed by a fibration by \cite[Proposition 2.3]{vdbergmoerdijkpathcat}. It follows that \(f\) is a fibration, since fibrations contain isomorphisms and are closed under composition.
\end{proof}

Although the above example is simple, it is a useful one. When working with weak equivalences, it is often useful to think of them as maps that behave ``like isomorphisms.'' In order to make this idea precise it is important that we can in fact view isomorphisms as a degenerate special case of weak equivalences. By placing categories with finite limits within the same general framework as other examples of path categories, such as Kan complexes in simplicial and cubical sets, we can easily compare them.

The example is also a useful one in type theory for the same reason. Any locally cartesian closed category is a model of extensional type theory \cite{seelylcctt} and also, since it has all finite limits an extensional path category. Many more recent examples of models of type theory coming from homotopy theory \cite{hofmannstreichergroupoid, voevodskykapulkinlumsdainess, awodeywarren, coquandcubicaltt} very naturally give rise to path categories, and so we can use path categories as a common framework in which to compare the two different kinds of model.

\begin{example}
  \label{ex:modelcat}
  The category of fibrant objects in a model structure where every object is cofibrant is a path category \cite[Section 2.1]{vdbergmoerdijkpathcat}.
\end{example}

\begin{example}
  As a special case of Example~\ref{ex:modelcat}, the category of Kan complexes \cite{gabrielzisman} is a path category with fibrations given by Kan fibrations and weak equivalences by homotopy equivalences.
\end{example}

\begin{example}
  \label{ex:groupoids}
  The category of groupoids is a path category, taking fibrations to be isofibrations and weak equivalences to be equivalences. See e.g. \cite{joyaltierneystacks} for more details. This is also a special case of Example~\ref{ex:modelcat}.
\end{example}

\begin{example}[Gambino--Garner \cite{gambinogarner}]
  \label{ex:ttfibcat}
  We can construct examples of path categories using type theory. We take \(\CC\) to be the category of contexts, fibrations to be the closure of projections \((\Gamma, x : A) \to \Gamma\) under retracts, and weak equivalences to be homotopy equivalences.
\end{example}

\begin{example}[Avigad--Kapulkin--Lumsdaine {\cite[Theorem 3.2.5]{avigadkapulkinlumsdaine}}]
  \label{ex:ttmodelfibcat}
  For any contextual category the underlying category of contexts can be viewed as a path category. This is a generalisation of Example~\ref{ex:ttfibcat}, by considering the syntactic contextual category of type theory.
\end{example}

In any path category we can identify the important class of maps of \emph{cofibrations}. We will define them via the general notion of lifting property below.

\begin{defn}
  Given two maps \(m : A \to B\) and \(f : C \to D\), we say \(m\) has the \emph{left lifting property} against \(f\) and \(f\) has the \emph{right lifting property} against \(m\) if for every commutative square as in the solid lines in the diagram below, there is a diagonal map, as in the dotted line below making two commutative triangles.
  \begin{equation*}
    \begin{tikzcd}
      A \ar[d, "m"] \rar & C \dar["f"] \\
      B \ar[r] \ar[ur, dotted] & D
    \end{tikzcd}
  \end{equation*}
\end{defn}

\begin{defn}
  A \emph{cofibration} is a map with the left lifting property against all trivial fibrations.
\end{defn}

In keeping with the spirit of path categories, we do not assume any ``existence conditions'' on the class of cofibrations, so in principle it could be the case that a path category has ``very few'' cofibrations. However, in any path category we can identify the class of cofibrations and talk about what properties elements of this class must satisfy.

Having said that, most of the particular examples of path categories that we consider here are the fibrant objects of a model structure, and as such every map can be factored as a cofibration followed by a trivial fibration.

\begin{example}
  In extensional path categories any map is a cofibration, since any map has the left lifting property against isomorphisms. In fact the converse also holds: If every map is a cofibration, then every trivial fibration is an isomorphism, but every map factors as a section of a trivial fibration followed by a fibration \cite[Proposition 2.3]{vdbergmoerdijkpathcat} and so also every map is a fibration.
\end{example}

\begin{example}
  The cofibrations in groupoids are precisely the functors that are injective on objects.
\end{example}

\begin{example}[Lumsdaine \cite{lumsdainecofibtt}]
  We consider the path category structure on the syntactic category of type theory (Example~\ref{ex:ttfibcat}). Any point constructor of any higher inductive type is a cofibration in the category of contexts, as long as the induction principle satisfies judgemental computation rules on points.\footnote{Without a general formal definition of higher inductive types, Lumsdaine presented this result as an informal general principle that holds for all known examples of higher inductive types.} In fact the judgemental computation rule precisely gives us commutativity of the upper triangle for the diagonal filler. As a special case of this we can consider the truncation map \(|-|\) for propositional truncation. We will use this idea to recover the original version of Kraus' paradox.

  If we augment type theory with the mapping cylinder higher inductive type then we can additionally show the following.
  \begin{enumerate}
  \item Cofibrations, homotopy equivalences and fibrations give a model structure on the category of contexts in type theory.\footnote{Lumsdaine refers to this as a pre-model-structure, since the category of contexts is not complete or cocomplete, and the choice of factorisation is not functorial, and these conditions are often assumed as part of the definition of model category.}
  \item Every cofibration is a retract of a point constructor of a higher inductive type.
  \end{enumerate}
\end{example}

We recall the following basic proposition for ``realigning'' maps using cofibrations.
\begin{prop}
  \label{prop:realignwithcof}
  Suppose we are given a cofibration \(m : A \to B\) and maps \(f : A \to C\) and \(g : B \to C\) such that \(g \circ m \sim f\)
  \begin{equation*}
    \begin{tikzcd}
      A \rar["f"] \dar["m", swap] \ar[dr, phantom, "\sim" pos=0.2] & C \\
      B \ar[ur, swap, "g"] & {}
    \end{tikzcd}
  \end{equation*}
  Then we can find \(g' : B \to C\) such that \(g' \circ m = f\).
\end{prop}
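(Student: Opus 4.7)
The plan is to produce $g'$ as the endpoint of a homotopy from $g$ constructed by lifting a homotopy witnessing $g \circ m \sim f$ along the cofibration $m$.

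First I would fix a path object $C \stackrel{r}{\to} P C \stackrel{p}{\to} C \times C$ and, using the hypothesis, choose a homotopy $H : A \to P C$ from $f$ to $g \circ m$, i.e.\ with $p_{0} \circ H = f$ and $p_{1} \circ H = g \circ m$. The key observation is that $p_{1} : P C \to C$ is a trivial fibration: it is a fibration as a composite of the fibration $p$ with the projection $C \times C \to C$ (which is a pullback of $C \to 1$), and it is a weak equivalence because $p_{1} \circ r = \mathrm{id}_{C}$, so by 3-for-2 applied to $r$ and $p_{1}$ the map $p_{1}$ is a weak equivalence.

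Now I would set up the lifting square
\begin{equation*}
  \begin{tikzcd}
    A \rar["H"] \dar[swap, "m"] & P C \dar["p_{1}"] \\
    B \rar[swap, "g"] \ar[ur, dotted, "\tilde{H}"] & C
  \end{tikzcd}
\end{equation*}
which commutes since $p_{1} \circ H = g \circ m$. Because $m$ is a cofibration and $p_{1}$ is a trivial fibration, we obtain a diagonal filler $\tilde{H} : B \to P C$ with $\tilde{H} \circ m = H$ and $p_{1} \circ \tilde{H} = g$. I would then define $g' := p_{0} \circ \tilde{H} : B \to C$ and compute $g' \circ m = p_{0} \circ \tilde{H} \circ m = p_{0} \circ H = f$, as required.

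There is no serious obstacle here; the only slightly non-trivial step is verifying that $p_{1}$ is a trivial fibration, which I would present carefully since the rest of the argument is a direct application of the lifting property. (As a byproduct, $\tilde{H}$ provides a homotopy from $g'$ to $g$, so $g'$ is homotopic to $g$, which is why this is called ``realigning.'')
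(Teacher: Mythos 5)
Your proof is correct and is essentially the paper's argument, just mirrored: the paper takes the homotopy with \(p_{0}\circ h = g\circ m\), lifts it against \(p_{0}\), and sets \(g' := p_{1}\circ j\), whereas you (after one application of symmetry of \(\sim\)) lift against \(p_{1}\) and project with \(p_{0}\). Your explicit check that the path-object projection is a trivial fibration is a standard fact that the paper leaves implicit, and is correct.
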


\begin{proof}
  Let \(h : A \to P C\) be the map witnessing the homotopy relation. In particular \(p_{0} \circ h = g \circ m\) and \(p_{1} \circ h = f\). Note that we have the commutative square given by the solid lines in the diagram below.
  \begin{equation*}
    \begin{tikzcd}
      A \dar["m"] \rar["h"] & P C \dar["p_{0}"] \\
      B \rar["g"] \urar[dotted, "j" description] & C
    \end{tikzcd}
  \end{equation*}
  Note that \(p_{0}\) is a fibration, since it is the composition of the fibration \(p : P C \to C \times C\) and the projection map \(C \times C \to C\), which is a pullback of the fibration \(C \to 1\) and so a fibration. Furthermore, \(p_{0}\) is a weak equivalence by 3-for-2, and so a trivial fibration.
  Hence the square has a diagonal filler, \(j\), as in the dotted arrow in the diagram. We can then define \(g' := p_{1} \circ j\), giving \(g' \circ m = p_{1} \circ j \circ m = p_{1} \circ h = f\).
\end{proof}

\section{hPropositions and propositional truncation}
\label{sec:hprop-prop-trunc}

In many cases higher inductive types can be naturally formulated in path categories. We give such a formulation for propositional truncation here. Our definition is based on the type theoretic definition of propositional truncation with a judgemental computation rule for the point constructor as in \cite[Section 8]{kectanonex}, since that is most relevant for our results here on Kraus' paradox. We point out that other, less strict versions of propositional truncation are also possible.

\begin{defn}
  A fibration \(f : A \to B\) in a path category is an \emph{hProposition} if the diagonal map \(A \to A \times_{B} A\) is a weak equivalence.

  A \emph{propositional truncation} of an arbitrary fibration \(f : A \to B\) is a factorisation of \(f\) as \(f = f' \circ i\) where \(f'\) is an hProposition and \(i\) has the left lifting property against all hPropositions. We will refer to \(i\) as the \emph{truncation map}.
\end{defn}

\begin{prop}
  If a map \(i : A \to C\) has the left lifting property against all hPropositions, then it is a cofibration.
\end{prop}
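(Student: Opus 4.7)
The plan is to reduce the statement to showing that every trivial fibration is an hProposition; once this is established the conclusion is immediate, since by definition a cofibration is a map with the left lifting property against all trivial fibrations, so if $i$ has the left lifting property against all hPropositions it in particular has it against all trivial fibrations.

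Thus the bulk of the argument is verifying that a trivial fibration $f : X \to Y$ is an hProposition, i.e. that the diagonal map $\Delta : X \to X \times_Y X$ is a weak equivalence. First I would form the pullback and observe that each projection $\pi_0, \pi_1 : X \times_Y X \to X$ is obtained as a pullback of the trivial fibration $f$ along itself, and hence is itself a trivial fibration by axiom (3) of a path category, and so in particular a weak equivalence. Then I would note that $\pi_0 \circ \Delta = \mathrm{id}_X$, which is a weak equivalence (as every isomorphism is a trivial fibration). Applying the 3-for-2 property (the special case of 2-out-of-6 noted after the definition of path category) to $\pi_0$ and $\Delta$, I conclude that $\Delta$ is a weak equivalence, so $f$ is an hProposition.

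There is no real obstacle here: the only slightly delicate point is remembering to invoke the fact that pullbacks of trivial fibrations are trivial fibrations (not merely fibrations), which is precisely what lets us apply 3-for-2 to deduce that $\Delta$ is a weak equivalence from $\pi_0 \circ \Delta = \mathrm{id}_X$. After that the implication from ``LLP against all hPropositions'' to ``LLP against all trivial fibrations'' is just specialising the assumption to the subclass we have just identified.
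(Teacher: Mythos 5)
Your proof is correct and follows essentially the same route as the paper: reduce to showing every trivial fibration is an hProposition, then deduce that the diagonal is a weak equivalence via stability of trivial fibrations under pullback and 3-for-2. The only cosmetic difference is that you apply 3-for-2 to \(\pi_0 \circ \Delta = \mathrm{id}\) whereas the paper applies it to the composite \(D \to D \times_E D \to E\); both work equally well.
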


\begin{proof}
  Note that it suffices to show that every trivial fibration \(g : D \to E\) is an hProposition. However, this is easy to check: since trivial fibrations are closed under pullback and composition, the map \(D \times_{E} D \to E\) is a trivial fibration, and so the diagonal \(D \to D \times_{E} D\) is a weak equivalence by 3-for-2.
\end{proof}

\begin{example}
  In any extensional path category, a map \(f : A \to B\) is an hProposition if and only if it is a monomorphism. Proposition truncations exists iff the underlying category is regular \cite{awodeybauerpat, maiettimodular}. We note that the truncation map \(A \to C\) can only be a monomorphism when \(f\) is already monic, and in this case the truncation map is an isomorphism.
\end{example}

\begin{example}
  We can explicitly describe the hPropositions in groupoids as follows. Note that for a fibration \(f\), the diagonal map \(A \to A \times_{B} A\) is an equivalence iff either projection map \(A \times_{B} A \to A\) is a trivial fibration. This is the case iff for all objects \(a\) of \(A\), the fibre of the projection \(A \times_{B} A \to A\) is a contractible category, which in groupoids precisely says that \(f\) is full and faithful.

  A propositional truncation of \(f\) is a factorisation of \(f\) as a bijective on objects functor followed by a full and faithful functor, which exists by taking the groupoid with the same objects as \(A\), and defining a morphism \(a \rightsquigarrow a'\) to be a morphism \(f(a) \rightsquigarrow f(a')\) in \(B\).
\end{example}

\newcommand{\istype}{\;\operatorname{Type}}

\begin{example}
  In the syntactic category of type theory, as in Example~\ref{ex:ttfibcat}, we can use propositional truncation in type theory to construct propositional truncations in the category of contexts. For this to work, we need to formulate propositional truncation using an induction principle with a judgemental computation rule for induction as in \cite[Section 8]{kectanonex}. To verify that truncation maps \((\Gamma,A) \to (\Gamma,\|A\|)\) do have the left lifting property against all hPropositions, we first consider the special case where an hProposition is a projection of the form \((\Gamma,z : \|A\|,B) \to (\Gamma,\| A \|)\) where \(\Gamma, z : \| A \| \vdash B \istype\), and the lower map in the lifting problem is the identity. Note that the induction term precisely gives a diagonal filler, with the upper triangle commuting using the judgemental computation rule.
  \begin{equation*}
    \begin{tikzcd}
      (\Gamma,A) \rar["f"] \dar[swap, "|-|"] & (\Gamma,z : \|A\|,B) \dar \\
      (\Gamma,\| A \|) \rar[equal] \ar[ur, dotted, "{\mathsf{ind}(B, f)}" description] & (\Gamma, \| A \|)
    \end{tikzcd}
  \end{equation*}
  Next, note that for arbitrary lifting problems against maps of the form \((\Delta, B) \to \Delta\) we can pull back along the bottom map to obtain a lifting problem of the form above, as illustrated below.
  \begin{equation*}
    \begin{tikzcd}
      (\Gamma, A) \rar \dar & (\Gamma, z : \| A \|, B[\sigma]) \rar \pbcorner \dar & (\Delta, B) \dar \\
      (\Gamma, \| A \|) \rar[equal] \ar[ur, dotted] & (\Gamma, \|A\|) \rar["\sigma"] & \Delta
    \end{tikzcd}
  \end{equation*}

  Finally, note that every hProposition is a fibration, and so in particular a retract of a dependent projection by \cite[Lemma 11]{gambinogarner}. Since maps with the right lifting property against a given map are closed under retracts, we can deduce that truncation maps have the left lifting property against all hPropositions.

  It is clear that the projection map \((\Gamma, \| A \|) \to \Gamma\) is an hProposition, and so we are done.
\end{example}

\section{Univalence in path categories}
\label{sec:univ-fibr-categ}

\begin{defn}
  For a fixed fibration \(\Udot \to \U\), we say a fibration \(f : A \to B\) is \emph{\(\U\)-small} if it is a pullback of \(\Udot \to \U\) along some map \(B \to \U\). We say an object \(A\) is \emph{\(\U\)-small} if the unique map \(A \to 1\) is \(\U\)-small.
\end{defn}

We will use the definition of univalence in path categories due to Van den Berg \cite[Definition 2.13]{vdbergunivalentpoly}. For completeness, we spell out below the details of the definition and his proof that in type theory it is equivalent to the more usual one.

We first recall the following fact.

\begin{lemma}
  \label{lem:coherentpathobj}
  Given any choice of path object \(P \U\) on \(\U\), we can choose a path object \(P \Udot\) on \(\Udot\) and fibration \(P \Udot \to P \U\) such that the following diagram commutes, and the canonical map \(P \Udot \,\to\, P \U \times_{\U \times \U} \Udot \times \Udot\) given by the right hand commutative square is a fibration.
  \begin{equation*}
    \begin{tikzcd}
      \Udot \rar \dar & P \Udot \dar \rar & \Udot \times \Udot \dar \\
      \U \rar & P \U \rar & \U \times \U
    \end{tikzcd}
  \end{equation*}

  Furthermore, if we are given two such path objects \(P \Udot\) and \(P \Udot'\), then there is a weak equivalence \(P \Udot \to P \Udot'\) making the following triangles commute.
  \begin{equation*}
    \begin{gathered}
      \begin{tikzcd}[column sep = 1.2em]
        P \Udot \ar[rr] \drar & & P \Udot' \dlar \\
        & P \U &
      \end{tikzcd}
    \end{gathered}
    \qquad
    \begin{gathered}
      \begin{tikzcd}[column sep = 1.2em]
        P \Udot \ar[rr] \ar[dr] & & P \Udot' \ar[dl] \\
        & \Udot \times \Udot &
      \end{tikzcd}
    \end{gathered}
  \end{equation*}
\end{lemma}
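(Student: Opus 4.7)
The plan is to build $P\Udot$ by forming a pullback that combines the path object $P\U$ with the fibre structure $\Udot \to \U$, and then to factor a canonical reflexivity map into it. Concretely, I would form the pullback
\begin{equation*}
  Q := P\U \times_{\U \times \U} (\Udot \times \Udot),
\end{equation*}
which exists because $\Udot \times \Udot \to \U \times \U$ is a fibration (it is a product of two copies of the given fibration $f : \Udot \to \U$). The pullback comes with two fibrations $\pi_{0} : Q \to P\U$ and $\pi_{1} : Q \to \Udot \times \Udot$. Since $r_{\U} \circ f : \Udot \to P\U$ and the diagonal $\Delta_{\Udot} : \Udot \to \Udot \times \Udot$ become equal after composing to $\U \times \U$, the universal property of $Q$ yields a canonical map $\iota : \Udot \to Q$. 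I would then factor $\iota$ as a weak equivalence followed by a fibration using \cite[Proposition 2.3]{vdbergmoerdijkpathcat}, giving
\begin{equation*}
  \Udot \xrightarrow{\,r\,} P\Udot \xrightarrow{\,q\,} Q,
\end{equation*}
and define the fibrations $P\Udot \to P\U$ and $P\Udot \to \Udot \times \Udot$ as $\pi_{0} \circ q$ and $\pi_{1} \circ q$ respectively.

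Verifying the required properties is then bookkeeping. The composite $\Udot \to P\Udot \to \Udot \times \Udot$ is $\pi_{1} \circ q \circ r = \pi_{1} \circ \iota = \Delta_{\Udot}$, and $r$ is a weak equivalence, so $P\Udot$ is a path object on $\Udot$. The stated square commutes because $q$ lands in $Q$ by construction, which encodes precisely that compatibility. Finally, the canonical comparison map from $P\Udot$ to $Q$ is literally $q$, which was produced as a fibration.

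For uniqueness, given two such data $(r, q, P\Udot)$ and $(r', q', P\Udot')$, both are weak equivalence / fibration factorisations of the same map $\iota : \Udot \to Q$, and I want a weak equivalence $P\Udot \to P\Udot'$ over $Q$ (which automatically yields commutativity of both triangles). The strategy is the standard one: form the pullback $X := P\Udot \times_{Q} P\Udot'$, with fibration projections $\mathrm{pr}_{0} : X \to P\Udot$ and $\mathrm{pr}_{1} : X \to P\Udot'$, and note that the pair $(r, r')$ induces a map $\Udot \to X$ whose composite with $\mathrm{pr}_{0}$ is $r$ and with $\mathrm{pr}_{1}$ is $r'$, both weak equivalences. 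The main obstacle, and the place I expect to have to work hardest, is showing that the two projections out of $X$ are themselves weak equivalences: once that is established, they are trivial fibrations, so one admits a section $\sigma : P\Udot \to X$ and the composite $\mathrm{pr}_{1} \circ \sigma : P\Udot \to P\Udot'$ is the desired weak equivalence over $Q$.

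To overcome this obstacle I would invoke the standard path-category fact that any two weak-equivalence/fibration factorisations of a map are related by a weak equivalence (an instance of Brown-type arguments available in path categories, compare \cite[Section 2]{vdbergmoerdijkpathcat}), applied in the slice over $Q$ where $(\Udot, \iota)$ is equipped with two such factorisations into $(P\Udot, q)$ and $(P\Udot', q')$. Alternatively, one can use 2-out-of-6 directly: the weak equivalences $r, r', q' \circ r' = \iota$ and the factorisations through $X$ supply enough composable chains to conclude that $\mathrm{pr}_{0}$ and $\mathrm{pr}_{1}$ are weak equivalences, completing the argument.
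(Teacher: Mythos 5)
Your proposal is correct and follows essentially the same route as the paper: form the pullback \(P\U \times_{\U\times\U} (\Udot\times\Udot)\), factor the canonical map out of \(\Udot\) as a weak equivalence followed by a fibration, and obtain the comparison map between two such path objects from the standard uniqueness-of-factorisations lemma (\cite[Lemma 2.9]{vdbergmoerdijkpathcat}), which the paper simply cites where you sketch its proof. The only point to tighten is your claim that \(\Udot\times\Udot\to\U\times\U\) is a fibration ``because it is a product of fibrations'': in a path category this needs the factorisation \(\Udot\times\Udot\to\Udot\times\U\to\U\times\U\) through two pullbacks of \(\Udot\to\U\), which is exactly how the paper justifies it.
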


\begin{proof}
  We first check that \(P \U \times_{\U \times \U} \Udot \times \Udot\) is a well defined object. Note that \(\Udot \times \Udot \to \U \times \U\) factors as \(\Udot \times \Udot \to \Udot \times \U \to \U \times \U\) where both maps are pullbacks of \(\Udot \to \U\), and so fibrations. Hence \(\Udot \times \Udot \to \U \times \U\) is a fibration, and so its pullback along \(P\U \to \U \times \U\) exists.
  
  We can factor the canonical map \(\Udot \to P \U \times_{\U \times \U} \Udot \times \Udot\) as a weak equivalence followed by a fibration by \cite[Proposition 2.3]{vdbergmoerdijkpathcat}, and this precisely gives us the required structure by composing with the pullback projection maps.

  For the second part we apply \cite[Lemma 2.25]{vdbergmoerdijkpathcat} to the factorisation above and again compose with the pullback projection maps. To see that the resulting map \(P \Udot \to P \Udot'\) is a weak equivalence, we use that weak equivalences are closed under homotopy, see the second paragraph of the proof of \cite[Theorem 2.16]{vdbergmoerdijkpathcat}.
\end{proof}

\begin{defn}[Van den Berg]
  \label{def:univalence}
  A fibration \(\Udot \to \U\) is \emph{univalent} if it satisfies the following. Suppose that we are given \(\U\)-small maps \(A \to C\) and \(B \to C\), witnessed by pullback squares as explicitly labelled below, together with a weak equivalence \(e : A \to B\) over \(C\).
  \begin{equation}
    \label{eq:univalenceconditions}
      \begin{gathered}
      \begin{tikzcd}
        A \rar["i"] \dar \pbcorner & \Udot \dar \\
        C \rar["f"] & \U
      \end{tikzcd}
    \end{gathered}
    \quad
    \begin{gathered}
      \begin{tikzcd}
        B \rar["j"] \dar \pbcorner & \Udot \dar \\
        C \rar["g"] & \U
      \end{tikzcd}
    \end{gathered}
    \qquad
    \begin{gathered}
      \begin{tikzcd}
        A \ar[rr, "e"] \drar & & B \dlar \\
        & C &
      \end{tikzcd}
    \end{gathered}
  \end{equation}

  Then we have homotopies \(f \sim g\) and \(j \circ e \sim i\), which are coherent, in the sense that if \(P\Udot\) is chosen as in Lemma~\ref{lem:coherentpathobj}, the homotopies can be given by maps \(C \to P \U\) and \(A \to P \Udot\) making the square on the right below commute.
  \begin{equation*}
    \begin{gathered}
      \begin{tikzcd}
        A \ar[dr, "i"] \ar[dd, swap, "e"] \ar[ddd, bend right] & \\
        \ar[r, phantom, "\sim"] & \Udot \ar[dd] \\
        B \ar[ur, swap, "j"] \ar[d] & \\
        C \ar[r, bend left, "f"] \ar[r, phantom, "\sim"] \ar[r, bend right, swap, "g"] & \U
      \end{tikzcd}
    \end{gathered}
    \qquad
    \begin{gathered}
      \begin{tikzcd}
        A \rar \dar & P \Udot \dar \\
        C \rar & P \U
      \end{tikzcd}
    \end{gathered}
  \end{equation*}
\end{defn}

\begin{prop}
  The definition of univalence is independent of the particular choice of path objects and map \(P \Udot \to P \U\).
\end{prop}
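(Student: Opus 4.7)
The plan is to establish independence in two stages: first with the path object $P\U$ on $\U$ held fixed and only the choice of $P\Udot$ (with its fibration to $P\U$) varying, and then allowing $P\U$ to vary as well.

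For the first stage, suppose $P\Udot$ and $P\Udot'$ are two choices satisfying the conditions of Lemma~\ref{lem:coherentpathobj} with the same underlying $P\U$. By the second part of that lemma, there is a weak equivalence $w : P\Udot \to P\Udot'$ commuting with the projections to both $P\U$ and $\Udot \times \Udot$. If $(h_C : C \to P\U, h_A : A \to P\Udot)$ is a pair of coherence witnesses for the data in \eqref{eq:univalenceconditions}, then $(h_C, w \circ h_A)$ is a pair of witnesses for $P\Udot'$: the commuting triangle over $P\U$ ensures the coherence square still commutes, while the commuting triangle over $\Udot \times \Udot$ ensures the endpoints remain $i$ and $j \circ e$. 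A symmetric argument handles the converse direction.

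For the second stage, given two arbitrary choices $(P\U, P\Udot)$ and $(P'\U, P'\Udot)$, first apply \cite[Lemma 2.9]{vdbergmoerdijkpathcat} to obtain a weak equivalence $w : P\U \to P'\U$ commuting with $r$ and $p$. Form the pullback $Q := P\U \times_{P'\U} P'\Udot$. Then $Q$ can be equipped with the structure of a valid choice of $P\Udot$-object for $P\U$ in the sense of Lemma~\ref{lem:coherentpathobj}: the projection $Q \to P'\Udot$ is a weak equivalence as the pullback of $w$ along the fibration $P'\Udot \to P'\U$, so by 2-for-3 the induced map $\Udot \to Q$ is also a weak equivalence; the composite $Q \to P'\Udot \to \Udot \times \Udot$ is a fibration; and a diagram chase identifies the canonical map $Q \to P\U \times_{\U \times \U} \Udot \times \Udot$ as the pullback of the fibration $P'\Udot \to P'\U \times_{\U \times \U} \Udot \times \Udot$ along the evident comparison map, hence as a fibration itself. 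Once $Q$ is so identified, the first stage transfers univalence witnesses for $(P\U, P\Udot)$ to witnesses for $(P\U, Q)$, and post-composing these with $w$ and with $Q \to P'\Udot$ respectively yields witnesses for $(P'\U, P'\Udot)$.

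The main obstacle is the verification of the conditions of Lemma~\ref{lem:coherentpathobj} for $Q$, especially showing that $w$ remains a weak equivalence after pullback along $P'\Udot \to P'\U$ (which relies on a standard lemma in path categories ensuring weak equivalences are preserved by pullback along fibrations) and the identification of $Q \to P\U \times_{\U \times \U} \Udot \times \Udot$ as a pullback of a known fibration.
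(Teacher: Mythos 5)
Your proposal is correct. The first stage coincides with the paper's own argument: the paper also transfers the upper homotopy between the two choices of \(P\Udot\) over a fixed \(P\U\) using the weak equivalence supplied by the second part of Lemma~\ref{lem:coherentpathobj}, and your observation that the two commuting triangles preserve, respectively, the coherence square and the endpoints \(i\) and \(j \circ e\) is exactly what is needed (with the converse direction obtained, as you say, by swapping the roles of the two path objects in the lemma). Where you genuinely diverge is in the second stage: the paper disposes of the dependence on \(P\U\) in one line by citing the argument of \cite[Corollary 2.10]{vdbergmoerdijkpathcat}, whereas you give an explicit construction, pulling back \(P'\Udot\) along the comparison weak equivalence \(w : P\U \to P'\U\) to manufacture a coherent path object \(Q\) over \(P\U\) and then reducing to the first stage. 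Your route is longer but more self-contained, and it addresses a point the paper's citation glosses over, namely that changing \(P\U\) forces a compatible change of \(P\Udot\) before the first stage can be applied; the verifications you list for \(Q\) (that \(Q \to P'\Udot\) is a weak equivalence because weak equivalences are stable under pullback along fibrations in a path category, 2-out-of-3 for \(\Udot \to Q\), and the identification of \(Q \to P\U \times_{\U \times \U} \Udot \times \Udot\) as a pullback of the corresponding fibration for \(P'\Udot\)) all go through. The paper's version buys brevity at the cost of leaning on an external reference; yours buys an argument that can be checked entirely within the axioms of a path category plus the two cited lemmas of Van den Berg--Moerdijk.
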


\begin{proof}
  First note that the choice of path object \(P \U\) makes no difference by the argument below \cite[Corollary 2.10]{vdbergmoerdijkpathcat}. Now note that given a homotopy for the upper triangle given relative to a path object \(P \Udot'\), we can use the weak equivalence \(P \Udot \to P \Udot'\) given in the second part of Lemma~\ref{lem:coherentpathobj} to construct a homotopy relative to \(P \Udot\) which is still coherent.
\end{proof}

\begin{thm}[Van den Berg]
  \label{thm:univalencetounivalence}
  Suppose that \(\CC\) is a path category constructed from a model of type theory, as in Example~\ref{ex:ttmodelfibcat}, and that the model satisfies function extensionality. Then a fibration \(\Udot \to \U\) satisfies univalence in the sense of Definition~\ref{def:univalence} iff there is a term witnessing that it is univalent according to the usual formulation in type theory.
\end{thm}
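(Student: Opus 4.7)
The plan is to translate between the internal, term-level univalence axiom in the type theory and the external, categorical data of Definition~\ref{def:univalence}, using the standard dictionary between the syntactic contextual category and $\CC$. Under this dictionary, a term of an identity type $\mathsf{Id}_{\U}(f,g)$ in a context $C$ corresponds to a map $C \to P\U$ into an appropriate choice of path object on $\U$, and a term of the dependent identity type over it corresponds to a lift into $P\Udot$. By Lemma~\ref{lem:coherentpathobj} and the surrounding independence result, we are free to take $P\U$ and $P\Udot$ to be given by the syntactic identity types up to weak equivalence, and to transport coherence data back and forth.

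For the implication from a type-theoretic $\mathsf{ua}$ to Definition~\ref{def:univalence}, I would start from the data in \eqref{eq:univalenceconditions} and work in the internal language at a generic $c : C$. The equivalence $e$ restricts to $e_c : f(c) \simeq g(c)$, so $\mathsf{ua}(e_c) : f(c) =_{\U} g(c)$. Abstracting over $c$ turns this term into a map $C \to P\U$ realising the homotopy $f \sim g$. The computation rule $\mathsf{idtoeqv}(\mathsf{ua}(e)) = e$ supplies a coherent witness that transport along $\mathsf{ua}(e_c)$ agrees with $e$, which externally is precisely a map $A \to P\Udot$ fitting in the right hand square. Function extensionality enters here (and dually in the converse) to ensure that pointwise $\mathsf{ua}$-coherence assembles into maps on the total spaces and that the ambient notion of equivalence is well behaved.

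For the converse, I would evaluate Definition~\ref{def:univalence} at the universal equivalence. Take $C$ to be the interpretation of $\sum_{X, Y : \U} X \simeq Y$, take $f, g : C \to \U$ to be the first two projections, and let $A \to C$ and $B \to C$ be their pullbacks along $\Udot \to \U$, with $e : A \to B$ the tautological equivalence carried by the third component of $C$. Definition~\ref{def:univalence} hands back a map $C \to P\U$ together with its $P\Udot$-lift. Internalising, this is a function $\sum_{X,Y:\U}(X \simeq Y) \to \sum_{X,Y:\U}(X =_\U Y)$ over $\U \times \U$, which by currying is a candidate $\mathsf{ua}$; the $P\Udot$-coherence supplies the transport computation rule. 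Standard arguments (using function extensionality) then upgrade this to the statement that $\mathsf{idtoeqv}$ is an equivalence.

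The main obstacle I expect is matching the coherence square involving $P\Udot$ with the transport computation rule for $\mathsf{ua}$, since Van den Berg's categorical formulation bundles both a homotopy $f \sim g$ and its fibred lift into a single square, whereas the type-theoretic statement separates the equality from its transport behaviour. Reconciling these requires care with the fibred path object, and this is where Lemma~\ref{lem:coherentpathobj} does the essential work: it guarantees that any choice of $P\Udot$ can be replaced by one compatible with the syntactic identity type over $\U$, so that the externally produced lift can be recognised internally as the transport coherence of $\mathsf{ua}$.
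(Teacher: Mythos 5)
Your proposal is correct and follows essentially the same route as the paper: reduce to the universal equivalence \(C = \sum_{X,Y:\U} X \simeq Y\), choose the path objects \(P\U\) and \(P\Udot\) given by the syntactic identity types so that the coherence square becomes the transport computation rule, and observe that the categorical data amounts to a section of \(\mathsf{idtoiso}\), which is upgraded to full univalence using function extensionality. The ``standard arguments'' you invoke for that last step are exactly the Orton--Pitts result the paper cites, and the internal-language dictionary you appeal to is made precise in the paper via Shulman's type theoretic fibration categories.
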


\begin{proof}
  Path categories arising from directly from models of type theory satisfy the additional conditions of \emph{type theoretic fibration categories}, as formulated by Shulman in \cite{shulmanuidhc}. In particular, this allows us to define the universal equivalence between pairs of types, as in \cite[Section 5]{shulmanuidhc}. This can be used to replace the quantification over all objects \(C\) in the definition of univalence with one particular \(C\). Explicitly, in type theoretic notation, we take \(C := \sum_{A, B : \U} A \simeq B\) with \(A\), \(B\) and \(e\) given by the appropriate projections.

  Since the definition of univalence is independent of the choice of path object, we can make an explicit choice of path objects that is most convenient for the proof. We define both path objects using type theoretic notation, which can then be interpreted into the type theoretic fibration category, following Shulman. We take \(P \U := \sum_{A, B : \U} A = B\) and we define \(P\Udot\) as below.
  \begin{equation*}
    P \Udot := \sum_{A, B : \U} \sum_{p : A = B} \sum_{a : A} \sum_{b : B} p_{\ast}(a) = b
  \end{equation*}

  We can now see that the homotopies precisely give us a term giving for all \(A, B : \U\) and all \(e : A \simeq B\) a choice of equality \(p : A = B\) together with an equality \(q(a) : p^{\ast}(a) = e(a)\) for each \(a : A\). Note that this precisely says that for every such \(e\), we have a path \(p : A = B\) such that \(e\) is homotopic to transport along \(p\). Assuming function extensionality this says that we can find \(p\) such that \(e\) is equal to transport along \(p\), i.e. that \(\mathsf{idtoeqv}\) has a section. It is clear that this follows from type theoretic univalence, which states that \(\mathsf{idtoeqv}\) is an equivalence. However, the converse can also be proved in type theory, as shown by Orton and Pitts \cite[Theorem 3.5]{ortonpittsdecompose},\footnote{This can be seen as an instance of very general constructions in type theory. See e.g. \cite[Exercise 11.8(d)]{rijkebook} and \cite[Section 3.27]{escardoufnotes}.} which can then be interpreted into the model.
\end{proof}

\begin{rmk}
  It is an open problem to determine whether or not this formulation of univalence implies function extensionality. If it does, then the assumption of function extensionality can be dropped from Theorem~\ref{thm:univalencetounivalence}.
\end{rmk}

\begin{defn}
  We say a path category is \emph{univalent} if for every fibration \(f : A \to B\) there is a univalent fibration \(\Udot \to \U\) for which \(f\) is \(\U\)-small:
  \begin{equation*}
    \begin{tikzcd}
      A \dar["f"] \rar \pbcorner & \Udot \dar \\
      B \rar & \U
    \end{tikzcd}
  \end{equation*}
\end{defn}

We will make heavy use of the following basic fact.
\begin{prop}
  Let \(\Udot \to \U\) be any fibration. Every \(\U\)-small object \(A\) is a subobject of \(\Udot\), witnessed by a monomorphism \(\iota_{A} : A \to \Udot\).
\end{prop}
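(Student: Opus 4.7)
The plan is to take $\iota_{A}$ to be the top edge of the very pullback square that witnesses the $\U$-smallness of $A$. By definition of $\U$-smallness for an object, the unique map $A \to 1$ arises as the pullback of $\Udot \to \U$ along some map $1 \to \U$, and this supplies a canonical map $\iota_{A} : A \to \Udot$ along the top of the resulting square. The claim I would prove is that this $\iota_{A}$ is automatically a monomorphism.

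To verify monicity, suppose we are given $f, g : X \to A$ with $\iota_{A} \circ f = \iota_{A} \circ g$. The two composites $X \to A \to 1$ coincide automatically, since there is a unique map from any object to the terminal object, and the two composites $X \to A \to \Udot$ coincide by hypothesis. Thus $f$ and $g$ both furnish factorisations of the very same cone over the cospan $1 \to \U \leftarrow \Udot$ through the pullback $A$, and the uniqueness clause of the universal property of the pullback forces $f = g$.

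This is really a general observation, independent of the path-categorical structure: the pullback of any morphism along a map out of the terminal object is a monomorphism, simply because any two parallel maps into $1$ agree. Existence of the relevant pullback is guaranteed by axiom (2), applied to the fibration $\Udot \to \U$. I do not anticipate a genuine obstacle here; the argument is essentially a one-line consequence of the universal property.
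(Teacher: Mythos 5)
Your proposal is correct and follows essentially the same route as the paper: both take \(\iota_{A}\) to be the top map of the pullback square witnessing \(\U\)-smallness, and both reduce monicity to the fact that \(1 \to \U\) is monic because its domain is terminal (the paper cites stability of monomorphisms under pullback, while you unwind that fact directly via the universal property). No gap.
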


\begin{proof}
  By the definition of \(\U\)-small object, we have a pullback diagram as below.
  \begin{equation*}
    \begin{tikzcd}
      A \rar \dar \pbcorner & \Udot \dar \\
      1 \rar & \U
    \end{tikzcd}
  \end{equation*}
  The bottom map is a monomorphism, since its domain is the terminal object. The top map is then a pullback of a monomorphism, and so also a monomorphism. We take \(\iota_{A}\) to be the top map.
\end{proof}

\begin{rmk}
  Although we think of \(\Udot \to \U\) as acting like a universe that contains every type belonging to the fibrations \(A \to B\), this isn't really accurate. We only required univalence, and no other closure conditions, such as \(\Sigma\) or \(\Pi\) types or that the universe contains any other types such as \(0\) and \(1\).
  
  In particular, if \(\CC\) is a preorder then every fibration is univalent since we only require that certain diagrams either commute or commute up to homotopy, and in preorders every diagram commutes. Hence every preorder path category is univalent. By applying this to the extensional path category on a lattice, we can see that univalence is not strong enough to show that coproducts are disjoint.
\end{rmk}

\begin{prop}
  A path category is both extensional and univalent if and only if it is a bounded \(\wedge\)-semi lattice.
\end{prop}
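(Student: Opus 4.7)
The plan is to prove the two directions separately. For the ``if'' direction, a bounded $\wedge$-semilattice viewed as a category is a preorder with all finite limits (given by meets and the top element), hence an extensional path category by Example~\ref{ex:extensionalfibcat}; since every diagram in a preorder commutes, univalence is automatically satisfied, as noted in the remark immediately preceding the proposition.

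For the ``only if'' direction, the key claim I would establish is that in any extensional univalent path category every self-isomorphism is the identity. Given $X \in \CC$ and an isomorphism $e : X \to X$, univalence makes $X \to 1$ into a $\U$-small fibration for some univalent $\Udot \to \U$, furnishing a pullback square whose top map $\iota_X : X \to \Udot$ is a monomorphism (since $1 \to \U$ is monic and monomorphisms are stable under pullback). I then apply the univalence condition with $C = 1$, using this same pullback square on both sides and $e$ as the equivalence, with the trivial path objects $P\U = \U$ and $P\Udot = \Udot$ available in the extensional setting (the fibration hypothesis of Lemma~\ref{lem:coherentpathobj} is automatic since every map is a fibration). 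The homotopy relation collapses to strict equality, so the univalence condition yields $\iota_X \circ e = \iota_X$, and monicity of $\iota_X$ forces $e = \mathrm{id}_X$.

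To finish, I apply this to the swap isomorphism $\sigma : B \times B \to B \times B$ characterised by $\pi_1 \sigma = \pi_2$ and $\pi_2 \sigma = \pi_1$, which is its own inverse and hence an isomorphism. The claim forces $\sigma = \mathrm{id}_{B \times B}$, whence $\pi_1 = \pi_1 \circ \sigma = \pi_2$. For any parallel pair $f, g : A \to B$ we then have $f = \pi_1 \circ (f, g) = \pi_2 \circ (f, g) = g$, making $\CC$ a preorder; combined with finite limits inherited from the extensional structure, $\CC$ is a bounded $\wedge$-semilattice. The main delicate point is the extraction of the strict equation $\iota_X \circ e = \iota_X$ from the univalence condition, which amounts to checking that the trivial path objects satisfy the coherence hypothesis of Lemma~\ref{lem:coherentpathobj} and that the coherence square of Definition~\ref{def:univalence} commutes automatically (reducing to the commutativity of the original pullback square).
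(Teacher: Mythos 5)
Your proof is correct and follows essentially the same route as the paper: both directions match, and the ``only if'' argument is exactly the paper's --- apply univalence to the swap automorphism of \(B \times B\), use that homotopy collapses to equality in the extensional setting together with monicity of \(\iota\) to force the swap to be the identity, and conclude that \(\CC\) is a preorder with finite limits. The only cosmetic difference is that you isolate ``every self-isomorphism is the identity'' as an intermediate claim before specialising to the swap.
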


\begin{proof}
  As noted in the above remark, to satisfy univalence we only require that certain diagrams commute or commute up to homotopy, and so it is vacuously true for preorders. A preorder has all finite limits iff it is a bounded \(\wedge\)-semi lattice. Therefore we confirm that a bounded \(\wedge\)-semi lattice with any path category structure is univalent, in particular the extensional path category.

  For the converse, assume that a path category \(\CC\) is both univalent and extensional. For any \(B \in \CC\), we consider the automorphism \(\tau : B \times B \to B \times B\) that swaps the two components of the product. In extensional path categories, two maps are homotopic iff they are equal. Hence univalence gives us a universe \(\Udot \to \U\) containing \(B \times B\) such that \(\iota_{B \times B} \circ \tau = \iota_{B \times B}\). Since \(\iota_{B \times B}\) is a monomorphism, we deduce that \(\tau = 1_{B \times B}\). However, we can now deduce that any two maps \(A \rightrightarrows B\) are equal.
\end{proof}

\begin{rmk}
  The \emph{equivalence extension property} \cite{sattlermodelstructures, awodeyccmc} is another categorical formulation of univalence. It follows from the existence of a univalent universe, and is a useful concept to consider when constructing a univalent universe. However, viewed strictly as a property of a path category it is much weaker than univalence. In particular every extensional path category satisfies the equivalence extension property.
\end{rmk}

For this paper we don't require the full univalence condition, but only the following weaker version.
\begin{prop}
  \label{prop:weakunivalence}
  Suppose \(\Udot \to \U\) is univalent. If we are given two \(\U\)-small fibrations \(f : A \to C\) and \(g : B \to C\), witnessed by pullback squares with top maps \(i\)~and~\(j\) and a weak equivalence \(e\) from \(A\) to \(B\) over \(C\), as in \eqref{eq:univalenceconditions} in the definition of univalence, then \(j \circ e \sim i\).
\end{prop}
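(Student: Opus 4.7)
The proof is essentially an extraction from the univalence hypothesis, discarding the portions of its data that the weaker conclusion does not require. Definition~\ref{def:univalence} supplies, given the data $A, B, C, f, g, i, j, e$, three pieces of information: a homotopy $f \sim g$, a homotopy between $i$ and $j \circ e$, and a coherence square relating the two. The present proposition asks only for the second of these.

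Concretely, the plan is to fix a path object $P\U$ on $\U$ together with a compatible path object $P\Udot$ on $\Udot$ fitting into the structure of Lemma~\ref{lem:coherentpathobj}, and to invoke univalence with respect to this choice. This produces in particular a map $H : A \to P\Udot$ whose composites with the projections $p_0, p_1 : P\Udot \to \Udot$ recover $i$ and $j \circ e$. By Definition~\ref{defn:homotopieswellbehaved} this $H$ is a homotopy between $i$ and $j \circ e$, and applying the symmetry of homotopy from Theorem~\ref{thm:calcfrac}(2) gives the desired $j \circ e \sim i$.

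No step here is expected to be subtle: the companion map $C \to P\U$ and the coherence square produced alongside $H$ are simply not used. The only minor point worth attending to is that the notion of homotopy in the conclusion is a priori defined up to an arbitrary path object, whereas we have worked with the particular coherent choice supplied by Lemma~\ref{lem:coherentpathobj}; this is harmless because Theorem~\ref{thm:calcfrac}(1) tells us the homotopy relation does not depend on this choice.
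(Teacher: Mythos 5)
Your proposal is correct and matches the paper's proof, which simply observes that \(j \circ e \sim i\) is one of the two homotopies provided directly by Definition~\ref{def:univalence} and forgets the rest of the data. The extra care you take about the choice of path object (and the appeal to symmetry, which is not even needed since the definition already states the homotopy in the order \(j \circ e \sim i\)) is harmless but not required.
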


\begin{proof}
  This is one of the homotopies in the definition of univalence, so we simply forget the other homotopy.
\end{proof}

\section{Kraus' paradox}
\label{sec:kraus-paradox}

We now have enough general theory to give the categorical version of Kraus' paradox. We first formulate the definition of transitive type in path categories as follows.
\begin{defn}
  For a fibration \(\Udot \to \U\), a \(\U\)-small object \(A\) is \emph{\(\U\)-homogeneous} if the maps \(\iota_{A} \circ \pi_{i} : A \times A \to \Udot\) are homotopic, as illustrated below.
  \begin{equation*}
    \begin{tikzcd}
      & A \drar["\iota_{A}"] & \\
      A \times A \urar["\pi_{0}"] \drar[swap, "\pi_{1}"] & \sim & \Udot \\
      & A \urar[swap, "\iota_{A}"] &
    \end{tikzcd}
  \end{equation*}
\end{defn}

We now give the main theorem, corresponding to \cite[Theorem 8.12]{kectanonex}.
\begin{thm}
  \label{thm:main}
  Suppose that a \(\U\)-small object \(A\) is \(\U\)-homogeneous. Let \(m : A \to B\) be a cofibration and \(s : B \to A\) any map. Then \(m\) is a monomorphism.
\end{thm}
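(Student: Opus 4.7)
The plan is to produce a map $g' : B \to \dot{\mathcal{U}}$ whose precomposition with $m$ recovers the universe inclusion $\iota_{A}$. Once we have such a $g'$, the fact that $\iota_{A}$ is a monomorphism (which was established in the proposition immediately preceding this section) will force $m$ itself to be a monomorphism by the standard cancellation: if $m \circ f_{1} = m \circ f_{2}$ then $\iota_{A} \circ f_{1} = g' \circ m \circ f_{1} = g' \circ m \circ f_{2} = \iota_{A} \circ f_{2}$, hence $f_{1} = f_{2}$.

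The way I would produce $g'$ is to combine homogeneity with the realignment proposition (Proposition~\ref{prop:realignwithcof}). First I would exhibit a homotopy between the two maps $\iota_{A}, \iota_{A} \circ s \circ m : A \to \dot{\mathcal{U}}$. By homogeneity we have $\iota_{A} \circ \pi_{0} \sim \iota_{A} \circ \pi_{1} : A \times A \to \dot{\mathcal{U}}$, and precomposing this homotopy with the pairing $\langle 1_{A}, s \circ m \rangle : A \to A \times A$ yields $\iota_{A} \sim \iota_{A} \circ s \circ m$ by the functoriality of homotopy (Theorem~\ref{thm:calcfrac}). This is exactly the situation of Proposition~\ref{prop:realignwithcof}, taking $f := \iota_{A}$, $g := \iota_{A} \circ s$, and the cofibration $m$: we have $g \circ m \sim f$, so there exists $g' : B \to \dot{\mathcal{U}}$ with $g' \circ m = \iota_{A}$ on the nose.

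The step that required the most thought is the choice of auxiliary map: the hypothesis gives us a map $s : B \to A$ but no equation relating $s$ and $m$, so one might worry it is useless. The role it plays is precisely to convert the purely ``global'' homotopy expressing homogeneity (which lives over $A \times A$) into a homotopy between two specific self-maps of $A$, namely $1_{A}$ and $s \circ m$, by plugging those two maps into the two projections. Without $s$ there is no second map $A \to A$ with which to pair $1_{A}$, and the homogeneity homotopy cannot be instantiated into a form to which realignment applies. After this observation the argument is just the assembly described in the previous paragraph, using only the realignment proposition, functoriality of homotopy, and the fact that $\iota_{A}$ is monic.
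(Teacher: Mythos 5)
Your proposal is correct and follows essentially the same route as the paper: instantiate the homogeneity homotopy along \(\langle 1_{A}, s \circ m\rangle\) to get \(\iota_{A} \sim \iota_{A} \circ s \circ m\), realign along the cofibration \(m\) via Proposition~\ref{prop:realignwithcof} to obtain \(g'\) with \(g' \circ m = \iota_{A}\), and conclude by left-cancelling against the monomorphism \(\iota_{A}\). The only difference is that you spell out the final cancellation step, which the paper leaves implicit.
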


\begin{proof}
  By \(\U\)-homogeneity of \(A\), we have \(\iota_{A} \circ \pi_{0} \sim \iota_{A} \circ \pi_{1}\), and so by Theorem~\ref{thm:calcfrac}~(\ref{it:calcfrac:intr}), \(\iota_{A} \circ \pi_{0} \circ \langle 1_{A}, s \circ m \rangle \sim \iota_{A} \circ \pi_{1} \circ \langle 1_{A}, s \circ m \rangle\). The left hand side is equal to \(\iota_{A}\) whereas the right hand side is equal to \(\iota_{A} \circ s \circ m\) and so \(\iota_{A} \sim \iota_{A} \circ s \circ m\).

  We illustrate this with the diagram below.
  \begin{equation*}
    \begin{tikzcd}
      & & A \drar["\iota_{A}"] & \\
      A \ar[urr, bend left, equal] \ar[dr, "m"] \ar[r, dotted] & A \times A \ar[ur, "\pi_{0}"] \ar[dr, "\pi_{1}"] & \sim & \Udot \\
      & B \rar[swap, "s"] & A \ar[ur, "\iota_{A}"] &
    \end{tikzcd}
  \end{equation*}
  However, we now note that we have the following homotopy commutative triangle.
  \begin{equation*}
    \begin{tikzcd}[sep = 1.5em]
      A \ar[rr, "\iota_{A}"] \ar[dd, "m"] \ar[dr, phantom, "\sim"] & & \Udot \\
      & A \urar[swap, "\iota_{A}"] & \\
      B \urar[swap, "s"] & & {}
    \end{tikzcd}
  \end{equation*}
  Hence by Proposition~\ref{prop:realignwithcof} we can find \(j : B \to \Udot\) such that \(j \circ m\) is strictly equal to the monomorphism \(\iota_{A}\). Hence \(m\) must also be a monomorphism.
\end{proof}

So far we have not used univalence. However, we do need it in order to get non trivial examples of \(\U\)-homogeneous types. We show how to obtain \(\U\)-homogeneous types from the more natural notion of homogeneity below.
\begin{defn}
  \label{defn:homogeneous}
  A type \(A\) is \emph{homogeneous} if there is a weak equivalence \(e\) from \(A \times A \times A\) to \(A \times A \times A\) over \(A \times A\) such that for the two sections \(s_{0} := \langle \pi_{0}, \pi_{1}, \pi_{0} \rangle\) and \(s_{1} := \langle \pi_{0}, \pi_{1}, \pi_{1} \rangle\), we have \(\pi_{2} \circ e \circ s_{0} \sim \pi_{2} \circ s_{1}\).
  \begin{equation*}
    \begin{tikzcd}
      A \times A \times A \ar[rr, "e"] \ar[dr, "{\langle \pi_{0}, \pi_{1} \rangle}" description] & & A \times A \times A \ar[dl, "{\langle \pi_{0}, \pi_{1} \rangle}" description] \\
      & A \times A \ar[ul, bend left, "s_{0}"] \ar[ur, bend right, swap, "s_{1}"] &
    \end{tikzcd}
  \end{equation*}
\end{defn}

\begin{lemma}
  \label{lem:homogentouhomogen}
  Suppose that \(A\) is \(\U\)-small and homogeneous and that \(\U\) is univalent. Then \(A\) is \(\U\)-homogeneous.
\end{lemma}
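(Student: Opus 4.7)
The plan is to apply weak univalence (Proposition~\ref{prop:weakunivalence}) to the self-equivalence $e$ on the trivial family over $A \times A$ with fibre $A$. First I would verify that the projection $\langle \pi_0, \pi_1 \rangle : A \times A \times A \to A \times A$ is $\U$-small: it is the pullback of $A \to 1$ along the terminal map $A \times A \to 1$, so by the pasting lemma for pullbacks (applied to the pullback presentation of $A$ as $\Udot \times_\U 1$) its classifying top map is $\iota_A \circ \pi_2 : A \times A \times A \to \Udot$.

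Next I would apply Proposition~\ref{prop:weakunivalence} with both fibrations taken to be $\langle \pi_0, \pi_1\rangle$ (and both top maps equal to $\iota_A \circ \pi_2$), using the weak equivalence $e : A \times A \times A \to A \times A \times A$ over $A \times A$ supplied by homogeneity. The conclusion is $\iota_A \circ \pi_2 \circ e \sim \iota_A \circ \pi_2$. Precomposing with the section $s_0$ and using $\pi_2 \circ s_0 = \pi_0$, part (3) of Theorem~\ref{thm:calcfrac} gives $\iota_A \circ \pi_2 \circ e \circ s_0 \sim \iota_A \circ \pi_0$.

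On the other hand, the homogeneity hypothesis is $\pi_2 \circ e \circ s_0 \sim \pi_2 \circ s_1 = \pi_1$, and postcomposing with $\iota_A$ (again Theorem~\ref{thm:calcfrac}(3)) yields $\iota_A \circ \pi_2 \circ e \circ s_0 \sim \iota_A \circ \pi_1$. Combining the two homotopies by transitivity produces $\iota_A \circ \pi_0 \sim \iota_A \circ \pi_1$, which is exactly the condition of $\U$-homogeneity.

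The only step that requires any care is the identification of the classifying top map of the presentation of $\langle \pi_0, \pi_1\rangle$ as a $\U$-small fibration; this is a routine diagram chase using pullback pasting, and I do not expect any serious obstacle. The rest is just pre- and postcomposition of homotopies and a single appeal to weak univalence.
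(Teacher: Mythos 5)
Your proposal is correct and follows essentially the same route as the paper: present \(\langle\pi_0,\pi_1\rangle : A\times A\times A \to A\times A\) as a pullback of \(\Udot \to \U\) with top map \(\iota_A\circ\pi_2\), apply Proposition~\ref{prop:weakunivalence} to the self-equivalence \(e\), and then combine with the homogeneity homotopy via Theorem~\ref{thm:calcfrac}. The only difference is cosmetic — you spell out the final pasting as explicit pre-/post-compositions and transitivity, where the paper draws it as one pasted diagram.
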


\begin{proof}
  We note that we have the following pullback diagram.
  \begin{equation*}
    \begin{tikzcd}
      A \times A \times A \rar["\pi_{2}"] \dar \pbcorner & A \rar["\iota_{A}"] \dar \pbcorner & \Udot \dar \\
      A \times A \rar & 1 \rar & \U
    \end{tikzcd}
  \end{equation*}
  Hence applying Proposition~\ref{prop:weakunivalence} with the equivalence \(e\) given by homogeneity, we have \(\iota_{A} \circ \pi_{2} \circ e \sim \iota_{A} \circ \pi_{2}\), and so \(\iota_{A} \circ \pi_{2} \circ e \circ s_{0} \sim \iota_{A} \circ \pi_{2} \circ s_{0} = \iota_{A} \circ \pi_{0}\). From the remaining part of the definition of homogeneity, we have \(\pi_{2} \circ e \circ s_{0} \sim \pi_{2} \circ s_{1}\), and so \(\iota_{A} \circ \pi_{2} \circ e \circ s_{0} \sim \iota_{A} \circ \pi_{2} \circ s_{1}\). Combining these two homotopies using Theorem~\ref{thm:calcfrac}~(\ref{it:calcfrac:eqreln}) together with the equations \(\pi_{2} \circ s_{0} = \pi_{0}\) and \(\pi_{2} \circ s_{1} = \pi_{1}\) gives us \(\iota_{A} \circ \pi_{1} \sim \iota_{A} \circ \pi_{0}\).

  We can illustrate this with the diagram below.\footnote{Although the homotopy \(e \circ s_{0} \sim s_{1}\) in the diagram is not part of the definition of homogeneity, it is also true. To see this, note that in general, for objects \(X\) and \(Y\), we can show using \cite[Proposition 2.7]{vdbergmoerdijkpathcat} (``Brown's lemma'') that \(PX \times PY\) is a path object for \(X \times Y\). Hence we can define homotopies into products componentwise. In this case, we combine the homotopy from Definition~\ref{defn:homogeneous} with the trivial homotopy. We can see from the algebraic proof that this is not strictly needed for the lemma.}
  \begin{equation*}
    \begin{tikzcd}[sep=1.2em]
      & A \times A \times A \ar[r, "\pi_{2}"] \ar[dd, "e"] & A \ar[dr, "\iota_{A}"] & \\
      A \times A \ar[r, phantom, "\sim"] \ar[ur, "s_{0}" description] \ar[dr, "s_{1}" description] \ar[urr, bend left = 5em, "\pi_{0}"] \ar[drr, bend right = 5em, swap, "\pi_{1}"] & \ar[rr, phantom, "\sim" description] & & \Udot \\
      & A \times A \times A \ar[r, "\pi_{2}"] & A \ar[ur, swap, "\iota_{A}"] &
    \end{tikzcd}
  \end{equation*}
  
\end{proof}

\begin{cor}
  \label{cor:usmallwellsupp}
  If \(A\) is homogeneous and \(\U\)-small for some univalent fibration \(\Udot \to \U\), \(m : A \to B\) is a cofibration and there is some map \(B \to A\), then \(m\) is a monomorphism.
\end{cor}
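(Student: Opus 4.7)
The plan is to observe that this corollary is essentially an immediate combination of the two results just established, namely Lemma~\ref{lem:homogentouhomogen} and Theorem~\ref{thm:main}. The only content is to check that the hypotheses line up correctly and to feed the output of the lemma into the theorem.

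First, I would note that the hypotheses of Lemma~\ref{lem:homogentouhomogen} are satisfied: we are told that $A$ is homogeneous and $\U$-small, and that $\Udot \to \U$ is univalent. Applying the lemma, we conclude that $A$ is $\U$-homogeneous. At this point we have upgraded the more natural notion of homogeneity (formulated purely in terms of an equivalence $A \times A \times A \to A \times A \times A$ over $A \times A$) to the universe-relative version needed for Kraus' argument.

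Next, I would invoke Theorem~\ref{thm:main}. The hypotheses of that theorem require a $\U$-small $\U$-homogeneous object, a cofibration $m : A \to B$, and a map $s : B \to A$. We have exactly these: $\U$-smallness and $\U$-homogeneity of $A$ from the previous step, $m$ is a cofibration by assumption, and the existence of a map $B \to A$ gives us the required $s$. The theorem then directly yields that $m$ is a monomorphism, completing the proof.

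The only subtlety worth flagging is that Theorem~\ref{thm:main} does not use univalence directly, so univalence enters solely through Lemma~\ref{lem:homogentouhomogen}, via the weak form of univalence isolated in Proposition~\ref{prop:weakunivalence}. There is no real obstacle here; the work has already been done, and the corollary is just the clean packaging of the two steps into a single statement phrased in terms of ordinary homogeneity rather than $\U$-homogeneity.
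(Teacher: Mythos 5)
Your proposal is correct and matches the paper's own proof exactly: the paper simply applies Lemma~\ref{lem:homogentouhomogen} followed by Theorem~\ref{thm:main}, which is precisely the two-step argument you give. Your additional remarks on how the hypotheses line up and where univalence enters are accurate elaborations of the same argument.
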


\begin{proof}
  We apply Lemma~\ref{lem:homogentouhomogen} and Theorem~\ref{thm:main}.
\end{proof}

\begin{cor}
  \label{cor:homgwellsup}
  If \(\CC\) is univalent, \(m : A \to B\) is a cofibration, \(A\) is homogeneous and there is any map at all \(s : B \to A\), then \(m\) is a monomorphism.
\end{cor}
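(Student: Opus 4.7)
The plan is to reduce this to the previous corollary by producing a suitable univalent universe in which $A$ lives. Since $\CC$ is univalent by assumption, we can apply the definition of univalence to any fibration of our choice to obtain a univalent $\Udot \to \U$ for which that fibration is $\U$-small. The natural choice is the terminal map $A \to 1$, which is a fibration by axiom 7 of the definition of a path category. This choice forces $A$ itself to be a $\U$-small object.

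With that universe in hand, the rest is immediate: we have $A$ homogeneous, $A$ is $\U$-small for a univalent $\Udot \to \U$, $m : A \to B$ is a cofibration, and there exists $s : B \to A$. These are exactly the hypotheses of Corollary~\ref{cor:usmallwellsupp}, so applying it yields that $m$ is a monomorphism.

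There is essentially no obstacle here. The only thing to be careful about is recognizing that the universe obtained from univalence depends on the fibration chosen, so we must deliberately pick $A \to 1$ (rather than, say, $m$ or $s$) in order to make $A$ itself $\U$-small, which is the hypothesis on $A$ needed by Corollary~\ref{cor:usmallwellsupp} via Lemma~\ref{lem:homogentouhomogen}. The proof is therefore a one-line invocation: apply the univalence of $\CC$ to $A \to 1$, then apply Corollary~\ref{cor:usmallwellsupp}.
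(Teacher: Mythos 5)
Your proof is correct and follows exactly the paper's route: use univalence of \(\CC\) applied to the fibration \(A \to 1\) to make \(A\) a \(\U\)-small object for some univalent \(\Udot \to \U\), then invoke Corollary~\ref{cor:usmallwellsupp}. The paper's own proof is the same two-step argument, just stated more tersely.
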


\begin{proof}
  Since \(\CC\) is univalent, \(A\) is \(\U\)-small for some univalent \(\U\). Hence we can apply Corollary~\ref{cor:usmallwellsupp}.
\end{proof}

\begin{cor}
  \label{cor:pointed}
  If \(\CC\) is univalent and \(A\) is an object that is homogeneous and has a point \(a_{0} : 1 \to A\), then any cofibration \(A \to B\) is a monomorphism.
\end{cor}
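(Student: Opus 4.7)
The plan is to observe that Corollary~\ref{cor:pointed} is an immediate specialisation of Corollary~\ref{cor:homgwellsup}: the only extra hypothesis of the latter beyond what is given is the existence of some map $s : B \to A$, and a point on $A$ lets us build such a map for free. So the proof amounts to manufacturing $s$ out of $a_0$ and then invoking the previous corollary.

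Concretely, I would proceed as follows. Given a cofibration $m : A \to B$, let $!_B : B \to 1$ be the unique map to the terminal object, which exists by axiom (7) of the definition of path category. Then the composite $s := a_0 \circ !_B : B \to A$ is a map from $B$ to $A$. Now the hypotheses of Corollary~\ref{cor:homgwellsup} are all satisfied: $\CC$ is univalent, $m : A \to B$ is a cofibration, $A$ is homogeneous, and we have exhibited a map $s : B \to A$. Applying that corollary yields that $m$ is a monomorphism, completing the proof.

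There is essentially no obstacle here, since the work has already been done in Theorem~\ref{thm:main}, Lemma~\ref{lem:homogentouhomogen}, and the two preceding corollaries. The only subtlety worth a sentence of comment is that we do not need $s \circ m$ to be related to $1_A$ in any way — Corollary~\ref{cor:homgwellsup} (and Theorem~\ref{thm:main} behind it) asks merely for the existence of some map $B \to A$, with no compatibility condition, which is why a constant map at the point $a_0$ suffices.
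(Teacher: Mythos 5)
Your proof is correct and is exactly the paper's argument: define \(s := a_{0} \circ \mathop{!_{B}}\) and apply Corollary~\ref{cor:homgwellsup}. Your closing remark that no compatibility between \(s\) and \(m\) is needed is accurate and matches the hypotheses of that corollary.
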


\begin{proof}
  We define \(s : B \to A\) to be \(a_{0} \circ \mathord{!}_{B}\) and apply Corollary~\ref{cor:homgwellsup}.
\end{proof}

\begin{cor}
  Suppose that \(\CC\) is univalent and that whenever \(m : A \to B\) is a cofibration, so is \(C \times m : C \times A \to C \times B\) for any object \(C\). Then any cofibration with homogeneous domain is a monomorphism.
\end{cor}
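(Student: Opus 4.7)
The plan is to reduce to Corollary~\ref{cor:homgwellsup} by multiplying the cofibration $m$ by $A$, using the diagonal of $A$ in place of a basepoint. Specifically, I would set $m' := 1_A \times m : A \times A \to A \times B$, which is a cofibration by the hypothesis on products. The codomain $A \times B$ admits a canonical map $s := \langle \pi_0, \pi_0 \rangle : A \times B \to A \times A$, obtained by composing the first projection $A \times B \to A$ with the diagonal of $A$.

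With $m'$ and $s$ in hand, I would like to apply Corollary~\ref{cor:homgwellsup} directly, so I need to know that the domain $A \times A$ is itself homogeneous. This is the main lemma I would prove along the way: given the witnessing equivalence $e : A \times A \times A \to A \times A \times A$ for the homogeneity of $A$, I would construct the corresponding equivalence $E$ for $A \times A$ componentwise. After identifying $(A \times A)^3 \cong A^3 \times A^3$ by grouping together the first coordinates of the three pairs and the second coordinates of the three pairs, $E$ corresponds to $e \times e$ over $(A \times A)^2 \cong A^2 \times A^2$. The homotopy condition for $A \times A$ then follows by pairing the homotopy witnessing homogeneity of $A$ with itself.

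The main obstacle is checking that $E$ really is a weak equivalence. For this I would appeal to the fact, available in path categories \cite{vdbergmoerdijkpathcat}, that the pullback of a weak equivalence along a fibration is again a weak equivalence; applying this twice, once on each factor, shows $e \times e$ is a weak equivalence. Note also that $A \times A$ is $\U$-small for some univalent $\U$ by univalence of $\CC$, so Corollary~\ref{cor:homgwellsup} applies.

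Once $m'$ is known to be a monomorphism, I would conclude that $m$ itself is a monomorphism by a straightforward cancellation argument: given $f, g : X \to A$ with $m f = m g$, the maps $\langle f, f \rangle$ and $\langle f, g \rangle : X \to A \times A$ both become $\langle f, m f \rangle$ after composing with $m'$, so they coincide and hence $f = g$.
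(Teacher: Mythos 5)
Your proof is correct, but it takes a genuinely different route from the paper. The paper fixes a parallel pair \(f, g : C \to A\) with \(m \circ f = m \circ g\), passes to the path category of fibrations over \(C\), observes that \(C \times \Udot \to C \times \U\) is still univalent and that \(C \times A \to C\) is pointed by \(\langle 1_{C}, f\rangle\), and applies Corollary~\ref{cor:pointed} to the cofibration \(C \times m\). You instead cross with \(A\) once and for all, use the diagonal to manufacture the section \(\langle \pi_{0}, \pi_{0}\rangle : A \times B \to A \times A\), and apply Corollary~\ref{cor:homgwellsup} to \(1_{A} \times m\); the price is the lemma that \(A \times A\) is again homogeneous. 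That lemma does go through as you describe: \(PA \times PA\) is a path object on \(A \times A\) (so homotopies can be paired), and \(e \times e\) is a weak equivalence because \(e \times 1\) is the pullback of \(e\) along a product projection, which is a fibration --- right properness of path categories is available via Brown's factorisation lemma, so your citation is legitimate, though you could equally factor \(e\) as a section of a trivial fibration followed by a trivial fibration and cross each piece with the identity. The trade-off between the two arguments is roughly this: the paper's version must verify (implicitly) that slices of path categories are path categories and that univalence and homogeneity restrict to the slice over each test object \(C\), and it must rerun the argument for every parallel pair; your version avoids slices entirely, proves the single statement that \(1_{A} \times m\) is monic, and extracts monicity of \(m\) by one cancellation, at the cost of the closure of homogeneity under squaring. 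Both use the hypothesis that cofibrations are stable under \(C \times (-)\) in exactly the same way.
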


\begin{proof}
  Suppose we are given two maps \(f, g : C \to A\) such that \(m \circ f = m \circ g\). We take the full subcategory of \(\CC / C\) consisting of fibrations. We recall that this is a path category \cite[Definition 2.5]{vdbergmoerdijkpathcat} and that \(C \times \Udot \to C \times \U\) is univalent. Also note that \(C \times A \to C\) is pointed, e.g. by \(\langle 1_{C}, f \rangle\). Hence we can apply Corollary~\ref{cor:pointed}.
\end{proof}

\section{Some worked examples in groupoids}
\label{sec:work-example-group}

For this section we work in the category of groupoids as in Example~\ref{ex:groupoids}, and we will see a few simple examples to illustrate the interaction between homogeneous types, cofibrations and univalence. We point out that some of the ideas that appear here can be thought of as simplified special cases of higher group theory, in particular the approach developed in \cite{buchholtzvdoornrijke} using HoTT. In particular, Theorem~\ref{thm:abeliannotunivalent} makes use of the fact that abelian groups are homogeneous, roughly corresponding to the link between central types and homogeneous types appearing in \cite{bctrcentral}. Also recall from \cite[Section 4.4]{buchholtzvdoornrijke} that for a \(1\)-group \(G\) in HoTT, elements of the centre of \(G\) correspond precisely to loops in the type of automorphisms of \(BG\).

For the first example, note that using the explicit description of cofibration as maps that are injective on objects, it is easy to see that every cofibration with discrete domain is monic. However, assuming the law of excluded middle, we can also view it as a special case of Kraus' paradox. Using excluded middle, we can show that every discrete groupoid is homogeneous. This allows us to apply Corollary~\ref{cor:usmallwellsupp} together with the fact that any discrete fibration is \(\U\)-small for a univalent universe \(\U\) \cite[Section 5.4]{hofmannstreichergroupoid}, to give another proof that such cofibrations must be monic.

For the remaining examples, we recall that any group \(G\) can be viewed as a one object groupoid \(BG\), and that this defines an embedding from groups to groupoids. We write the object of \(BG\) as \(\ast\).

\begin{prop}
  \label{prop:groupnomonictrunc}
  The truncation map \(BG \to \| BG \|\) is monic iff \(G\) is trivial.
\end{prop}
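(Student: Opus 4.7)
The plan is to prove this directly by first identifying $\|BG\|$ explicitly, and then analysing monicity elementarily in the category of groupoids. I do not expect to need Theorem~\ref{thm:main} or any of its corollaries for this statement; rather, the interest of the proposition is that it illustrates what the main theorem rules out (namely, that $BG$ cannot be $\U$-homogeneous for a univalent $\U$ when $G$ is nontrivial).

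First, I would compute $\|BG\|$. Using the explicit description of propositional truncation in groupoids recalled earlier in the paper, the truncation of the fibration $BG \to 1$ is the groupoid with the same object set as $BG$, namely the one-point set $\{\ast\}$, whose hom-set $\ast \to \ast$ is the hom-set in $1$ between the images of $\ast$, which is a singleton. Hence $\|BG\| \cong 1$ and the truncation map is just the unique functor $BG \to 1$. In particular there is only one map $BG \to \|BG\|$ to analyse.

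Next I would translate ``$BG \to 1$ is monic in groupoids'' into a manageable condition. Since composition with $BG \to 1$ is automatic, the map is monic precisely when, for every groupoid $C$, any two functors $C \to BG$ are equal. For the forward direction, specialise to $C = BG$ and compare the identity functor $1_{BG}$ with the constant functor $c : BG \to BG$ that sends every morphism to $\mathrm{id}_{\ast}$; both are genuine functors, so monicity forces $1_{BG} = c$, i.e.\ $g = \mathrm{id}$ for every $g \in G$, and $G$ is trivial. For the backward direction, if $G$ is trivial then $BG \cong 1$, the map $BG \to \|BG\|$ is the identity on $1$, and in particular is a monomorphism.

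I do not see any genuine obstacle: the only step that requires any thought is recognising that $\|BG\| \cong 1$, and this is immediate from the construction of propositional truncation in groupoids recalled earlier. The rest is elementary category theory of one-object groupoids.
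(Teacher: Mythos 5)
Your proof is correct and follows essentially the same route as the paper: both arguments rest on the observation that the truncation collapses every \(g \in G\) to an identity, so that monicity forces \(G\) to be trivial. The only cosmetic difference is that the paper argues abstractly from the hProposition property (any two parallel paths in \(\|BG\|\) are equal, so \(|g| = 1_{|\ast|}\), and a monic functor is faithful), whereas you compute \(\|BG\| \cong 1\) explicitly from the construction of truncations in groupoids and test monicity against the identity and the constant endofunctor of \(BG\).
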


\begin{proof}
  In an hProposition, any two paths between two objects must be equal. In particular for \(g \in G\), we must have \(|g| = 1_{|\ast|}\), and so if \(|-|\) is monic, we can deduce \(g = 1_{\ast}\).
\end{proof}

\begin{thm}
  \label{thm:abeliannotunivalent}
  If \(G\) is non trivial and abelian, then \(BG\) is not \(\U\)-small for any univalent fibration \(\U\).
\end{thm}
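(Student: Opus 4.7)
The plan is to argue by contradiction. Suppose for some univalent fibration $\Udot \to \U$ that $BG$ is $\U$-small. I would then apply Corollary~\ref{cor:usmallwellsupp} to the truncation functor $|-| : BG \to \| BG \|$ to deduce that it is a monomorphism, and invoke Proposition~\ref{prop:groupnomonictrunc} to conclude that $G$ is trivial, contradicting the hypothesis. For this I need three ingredients: that $|-|$ is a cofibration, that there is some map $\| BG \| \to BG$, and that $BG$ is homogeneous.

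The first two are immediate. The cofibrations of groupoids are the functors injective on objects, and since $BG$ has only one object, every functor out of $BG$ is trivially a cofibration. Moreover $\| BG \|$ also has a single object, by the explicit description of propositional truncation of groupoids recalled earlier, so the choice of $\ast \in BG$ determines a constant functor $\| BG \| \to BG$.

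The main content is to verify that $BG$ is homogeneous using that $G$ is abelian. Since $BG^{3}$ has a single object, any endofunctor of $BG^{3}$ is determined by its effect on the endomorphism monoid $G^{3}$ at that object, and functoriality amounts to requiring that this effect be a group homomorphism. I would define $e : BG^{3} \to BG^{3}$ on morphisms by $(g_{1}, g_{2}, g_{3}) \mapsto (g_{1}, g_{2}, g_{3} + g_{2} - g_{1})$ in additive notation. Commutativity of $G$ is precisely what is needed to make this assignment a group homomorphism $G^{3} \to G^{3}$. The map $e$ visibly lies over $BG \times BG$, and its inverse on morphisms is $(g_{1}, g_{2}, g_{3}) \mapsto (g_{1}, g_{2}, g_{3} - g_{2} + g_{1})$, so $e$ is an isomorphism of groupoids and in particular an equivalence. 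Finally, $e \circ s_{0}(g_{1}, g_{2}) = e(g_{1}, g_{2}, g_{1}) = (g_{1}, g_{2}, g_{2})$, so $\pi_{2} \circ e \circ s_{0} = \pi_{2} \circ s_{1}$ holds on the nose and the homotopy required by the definition of homogeneity can be taken to be the identity.

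The main obstacle is really just locating the formula for $e$ and observing the double role played by abelianness: it is what ensures functoriality of $e$ and it is simultaneously what makes the equality $\pi_{2} \circ e \circ s_{0} = \pi_{2} \circ s_{1}$ strict. Once homogeneity is in hand, Corollary~\ref{cor:usmallwellsupp} immediately forces the truncation map to be monic, and Proposition~\ref{prop:groupnomonictrunc} then contradicts nontriviality of $G$.
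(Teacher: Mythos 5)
Your proposal is correct and follows essentially the same route as the paper: reduce via Proposition~\ref{prop:groupnomonictrunc} and Corollary~\ref{cor:usmallwellsupp} to showing \(BG\) is homogeneous, and witness homogeneity by the automorphism \((g,h,k)\mapsto(g,h,hg^{-1}k)\) of \(G^{3}\) (your additive \(g_{3}+g_{2}-g_{1}\) is the same map), with abelianness guaranteeing it is a homomorphism and the required homotopy holding on the nose. The only difference is that you spell out the hypotheses of Corollary~\ref{cor:usmallwellsupp} (the truncation map is a cofibration and \(\|BG\|\to BG\) exists) which the paper leaves implicit.
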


\begin{proof}
  By Proposition~\ref{prop:groupnomonictrunc} and the assumption \(G\) is non trivial, the truncation map \(B G \to \| B G \|\) cannot be monic. Hence by Corollary~\ref{cor:usmallwellsupp}, to show \(BG\) is not \(\U\)-small for univalent \(\U\) it suffices to show that \(BG\) is homogeneous. Using the embedding from groups to groupoids, we can instead show the corresponding property of the group~\(G\). That is, we need an automorphism \(\theta : G \times G \times G \to G \times G \times G\) such that \(\pi_{2} \circ \theta \circ s_{0} \sim \pi_{2} \circ \theta \circ s_{1}\). We will in fact ensure that \(\theta \circ s_{0} = \theta \circ s_{1}\). We define \(\theta\) as follows.
  \begin{equation*}
    \theta(g, h, k) := (g, h, h g^{-1} k)
  \end{equation*}
  We check that this is a group isomorphism. It is clear that \(\theta\) preserves the identity. Since \(G\) is abelian, we can see that \(\theta\) preserves group multiplication, and so is a homomorphism. It is also clear that it has an inverse defined similarly, and so is an isomorphism. However, we have now checked that \(BG\) is homogeneous and so we can apply Corollary~\ref{cor:usmallwellsupp}.
\end{proof}

On the other hand, there are examples of non abelian \(G\) that are pullbacks of a univalent fibration. In particular we have the following.
\begin{thm}
  Suppose that \(G\) is a \emph{complete} group. That is, \(G\) has trivial centre and every automorphism of \(G\) is an inner isomorphism (i.e. of the form \(\lambda x.h^{-1}xh\) for some \(h\)). Then \(BG \to 1\) is univalent.
\end{thm}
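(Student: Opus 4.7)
The plan is to verify Definition~\ref{def:univalence} directly in the case $\Udot = BG$, $\U = 1$. Since $\U$ is terminal, every $\U$-small fibration over a base $C$ is, up to canonical isomorphism, the projection $\pi_{2}: C \times BG \to C$, and the classifying maps $f, g: C \to \U$ are uniquely determined. I may therefore take $A = B = C \times BG$ with $i = j = \pi_{2}$, making the homotopy $f \sim g$ trivial, and with $P\U = 1$ the coherence square commutes automatically. The only real content is to construct, for each equivalence $e: C \times BG \to C \times BG$ over $C$, a natural isomorphism $\eta: \pi_{2} \circ e \Rightarrow \pi_{2}$.

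I would first unpack $e$ into group-theoretic data. Because $BG$ has a unique object and $e$ lies over $C$, $e$ acts as the identity on objects and sends each morphism $(\gamma, g): (c_{1}, \ast) \to (c_{2}, \ast)$ to $(\gamma, \tilde e(\gamma, g))$ for some $\tilde e(\gamma, g) \in G$. Writing $\phi_{c}(g) := \tilde e(\mathrm{id}_{c}, g)$ and $\alpha_{\gamma} := \tilde e(\gamma, 1_{G})$, functoriality applied to the two factorisations $(\gamma, g) = (\gamma, 1) \circ (\mathrm{id}_{c_{1}}, g) = (\mathrm{id}_{c_{2}}, g) \circ (\gamma, 1)$ gives $\tilde e(\gamma, g) = \alpha_{\gamma} \phi_{c_{1}}(g) = \phi_{c_{2}}(g) \alpha_{\gamma}$, and full faithfulness of $e$ forces each $\phi_{c}$ to be an automorphism of $G$. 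By the two hypotheses on $G$, there is a unique $h_{c} \in G$ with $\phi_{c}(x) = h_{c}^{-1} x h_{c}$; I define $\eta_{(c, \ast)} := h_{c}$, viewed as an element of $G = \mathrm{Hom}_{BG}(\ast, \ast)$.

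Naturality of $\eta$ reduces to the identity $\tilde e(\gamma, g) = h_{c_{2}}^{-1} g h_{c_{1}}$ for every $\gamma: c_{1} \to c_{2}$ and every $g \in G$. Equating the two expressions above for $\tilde e(\gamma, g)$ yields $\phi_{c_{2}}(g) = \alpha_{\gamma} \phi_{c_{1}}(g) \alpha_{\gamma}^{-1}$; substituting $\phi_{c_{i}} = \mathrm{Ad}_{h_{c_{i}}}$ and using that the assignment $h \mapsto \mathrm{Ad}_{h}$ has kernel $Z(G) = 1$ and so is injective, this pins down $\alpha_{\gamma} = h_{c_{2}}^{-1} h_{c_{1}}$, and back-substitution then gives the naturality identity. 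The main obstacle will be this final calculation: it is short but uses both hypotheses essentially --- the inner automorphism hypothesis produces the candidate $h_{c}$, while the trivial centre hypothesis provides both the uniqueness of $h_{c}$ and the injectivity of $\mathrm{Ad}$ needed to identify $\alpha_{\gamma}$.
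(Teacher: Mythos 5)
Your proposal is correct and follows essentially the same route as the paper: reduce univalence over the terminal base to producing, for each equivalence $e$ of $C\times BG$ over $C$, a natural isomorphism $\pi_{2}\circ e\Rightarrow\pi_{2}$; extract $h_{c}$ from the inner-automorphism hypothesis applied to $\phi_{c}=\tilde e(\mathrm{id}_{c},-)$; and use triviality of the centre to pin down $\tilde e(\gamma,1)=h_{c_{2}}^{-1}h_{c_{1}}$ and verify naturality. Your appeal to injectivity of $h\mapsto\mathrm{Ad}_{h}$ is the same computation the paper performs by showing $h_{a'}\theta(p,1_{\ast})h_{a}^{-1}$ is central.
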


\begin{proof}
  First note that if \(B \to A\) is a pullback of \(BG \to 1\), then we have a canonical isomorphism \(B \cong A \times BG\), and that equivalences over \(A\) are precisely automorphisms of \(A \times BG\) over \(A\). Each automorphism of \(A \times BG\) over \(A\) is determined by a homomorphism \(\theta : A \times BG \to BG\).

  Note that for every object \(a\) of \(A\), the map \(\theta\) restricts to an automorphism \(\theta(1_{a}, -)\) of \(G\). By assumption, this must be an inner automorphism. That is, for all objects \(a\) of~\(A\) there must exist \(h_{a}\) such that \(\theta(1_{a}, g) = h_{a}^{-1}gh_{a}\).

  Now for any objects \(a, a'\) of \(A\), any path \(p : a \rightsquigarrow a'\) and any element \(g\) of \(G\) we have the following.
  \begin{align*}
    h_{a'}\theta(p, 1_{\ast})h_{a}^{-1} g &= h_{a'}\theta(p, 1_{\ast})h_{a}^{-1}h_{a}\theta(1_{a}, g)h_{a}^{-1} \\
                          &= h_{a'}\theta(p, g)h_{a}^{-1} \\
                          &= h_{a'}\theta(1_{a'}, g)h_{a'}^{-1}h_{a'}\theta(p, 1_{\ast})h_{a}^{-1} \\
                          &= g h_{a'}\theta(p, 1_{\ast})h_{a}^{-1}
  \end{align*}
  We deduce that \(h_{a'}\theta(p, 1_{\ast})h_{a}^{-1}\) belongs to the centre of \(G\), and so we deduce \(h_{a'}\theta(p, 1_{\ast})h_{a}^{-1} = 1_{G}\), since \(G\) has trivial centre. Therefore \(h_{a'}\theta(p, 1_{\ast}) = h_{a}\).

  We now verify that \(h\) is a natural transformation from \(\theta : A \times BG \to BG\) to \(\pi_{1} : A \times BG \to BG\). Every morphism in \(A \times BG\) is of the form \((p, g)\) where \(p : a \rightsquigarrow a'\) in \(A\) and \(g \in G\). We then calculate as follows.
  \begin{align*}
    h_{a'} \theta(p, g) &= h_{a'}\theta(p, 1_{\ast}) \theta(1_{a}, g) \\
                        &= h_{a} \theta(1_{a}, g) \\
    &= g h_{a} \qedhere
  \end{align*}
  We can thus use \(h\) as the homotopy \(j \circ e \sim i\) required in Definition~\ref{def:univalence}. The required homotopy \(f \sim g\) is trivial.
\end{proof}

\begin{example}
  The symmetric group \(S_{3}\) is univalent as an object in groupoids.
\end{example}

\begin{rmk}
  The above proof is partially based on an example due to Mike Shulman\footnote{This example was suggested as part of an online discussion, which is publicly available at \url{https://hott.zulipchat.com/\#narrow/channel/228519-general/topic/Inconsistency.20of.20HITs.20that.20land.20in.20any.20universe/with/422268465}} of a ``self referential'' univalent fibration in homotopy type theory, i.e. a family of types \(b : B \vdash A : \U\) such that the projection \(\sum_{b : B}A \to B\) is univalent in the sense of Definition~\ref{def:univalence} and such that \(B \simeq A(b_{0})\) for some \(b_{0} : B\). He observed one can show internally in HoTT that the classifying space \(B G\) has trivial automorphism group when \(G\) is a group with trivial centre and trivial outer automorphism group, and used this to construct the example as follows. \(S_{n}\) satisfies the conditions for \(n \neq 2, 6\). We take \(B\) to be \(n\) copies of \(B S_{n}\). This is a sum of higher inductive types, and so a higher inductive type. Hence we can define \(A\) by higher recursion. We take \(A\) to be equal to \(B\) on point constructors. A path constructor consists of \(m < n\) together with an element, say \(g\) of \(S_{n}\). For the higher recursion, we need for each path constructor a proof of \(B = B\), or equivalently an equivalence \(B \simeq B\). We define the equivalence for \((m, g)\) to be \((m', z) \mapsto (g(m'), z)\).
\end{rmk}

\section{Conclusion}
\label{sec:concl-direct-future}

\subsection{Variations and generalisations}
\label{sec:vari-gener}

\newcommand{\sets}{\mathbf{Set}}
\newcommand{\refl}{\mathsf{refl}}

Recently David W\"{a}rn \cite{warnkrausparadox} has noticed a variation on Kraus' paradox that works as follows. We will present it for all cofibrations, although following Kraus, the argument was originally phrased using propositional truncation.

Let \(m : A \to B\) be a cofibration with \(A\) a small type. Working internally in type theory, for each element \(a\) of \(A\) we have a contractible type \(C(a)\) defined as \(\sum_{a' : A} a' = a\), with the centre of contraction given explicitly as \((a, \refl)\). The special case of univalence for contractible types tells us that the type of all small contractible types is contractible. Hence we can lift to obtain for each \(b : B\) a type \(C(b)\) together with a proof that \(C(b)\) is contractible, as illustrated below.
\begin{equation*}
  \begin{tikzcd}[column sep = huge, row sep = large]
    A \ar[r] \ar[d, swap, "m"] & \sum_{C : \U} \operatorname{isContr}(C) \ar[d] \\
    B \ar[r] \ar[ur, dotted] & 1
  \end{tikzcd}
\end{equation*}
In particular this includes a witness that \(C(b)\) is inhabited, say \(c(b) : C(b)\). The upper triangle in the lifting diagram tells us that for \(a : A\), \(C(m(a))\) is definitionally equal to \(\sum_{a' : A} a' = a\) and \(c(m(a))\) is definitionally equal to \((a, \refl)\). Hence if we apply the first projection we get \(\pi_{0}(c(m(a))) \equiv a\).

In some ways this argument is more general than Kraus': it only requires the special case of univalence for contractible types, and does not require \(A\) to be homogeneous or inhabited. However, in at least one way it is, unfortunately, less general. In formal treatments of type theory the projection terms \(\pi_{0}\) include type annotations that explicitly describe the types used to construct the \(\Sigma\) type. In particular, the variable \(a\) occurs in the term \(\pi_{0}\) we used above implicitly as a part of the type annotation. In W\"{a}rn's formalisation in the Agda proof assistant the type annotation appears as an implicit parameter to the projection function that is inferred automatically by Agda. This issue prevents us from adapting the argument to arbitrary path categories.

For a specific counterexample to the statement that all cofibrations are monomorphisms in the presence of semi-univalence, we can consider the extensional path category of \(\sets\), and make the following observation, based on the cardinal model of type theory due to Bauer and Winterhalter \cite[Section 8.3]{winterhalterthesis}. We say a fibration is \emph{semi-univalent} if in Definition~\ref{def:univalence} we only have the ``lower homotopy'' \(f \sim g\).\footnote{This is a reformulation of the propositional version of the \emph{isomorphism reflection principle} from \cite[Section 8.3]{winterhalterthesis}.} Note that in type theory semi-univalence implies propositional extensionality, and so also univalence for contractible types.

\begin{prop}
  Assuming the axiom of choice, the extensional path category on \(\sets\) is semi-univalent.
\end{prop}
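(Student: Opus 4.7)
The plan is to construct, for each fibration $f : A \to B$ in $\sets$, a semi-univalent fibration of which $f$ is a pullback, using the axiom of choice to select canonical representatives for each isomorphism class of sets. First I would fix a cardinal $\kappa > |A|$ and let $\U$ be the set of all cardinals $\lambda < \kappa$, each identified (via the axiom of choice) with its initial ordinal; this is a set because it embeds into $\kappa$. Then put $\Udot := \{(\lambda, x) : \lambda \in \U, \, x \in \lambda\}$ with the first projection.

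To exhibit $f$ as $\U$-small, apply the axiom of choice to pick, for each $b \in B$, a bijection $\phi_{b} : f^{-1}(b) \to |f^{-1}(b)|$. Define the classifying map $g : B \to \U$ by $g(b) := |f^{-1}(b)|$ (which lies in $\U$ since $|f^{-1}(b)| \leq |A| < \kappa$) and the top map $\tilde{f} : A \to \Udot$ by $a \mapsto (g(f(a)), \phi_{f(a)}(a))$. A direct verification shows that the resulting square is a pullback.

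For semi-univalence, suppose we are given pullbacks $A_{1}, A_{2}$ of $\Udot \to \U$ along two maps $f_{1}, f_{2} : C \to \U$, together with an equivalence $e : A_{1} \to A_{2}$ over $C$ (which, in the extensional structure, is automatically an isomorphism). The fibre of $A_{i} \to C$ over each $c \in C$ is canonically in bijection with the set $f_{i}(c)$; restricting $e$ fibrewise therefore yields, for every $c$, a bijection $f_{1}(c) \cong f_{2}(c)$. Since the elements of $\U$ are cardinals rather than arbitrary ordinals, equinumerous elements are equal, so $f_{1}(c) = f_{2}(c)$ for all $c$, whence $f_{1} = f_{2}$. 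In the extensional path category this equality coincides with the homotopy relation $f_{1} \sim f_{2}$, which is exactly what semi-univalence demands.

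The main use of choice is the simultaneous selection of the bijections $\phi_{b}$, together with the identification of cardinals with initial ordinals; without it one cannot produce canonical representatives of isomorphism classes of sets and the construction collapses. This is also the conceptual reason why semi-univalence can be thought of as an ``isomorphism reflection principle'': it demands precisely the kind of coherent choice of representative that the axiom of choice provides here.
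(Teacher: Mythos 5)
Your proof is correct and follows essentially the same strategy as the paper: the universe classifies fibres by cardinality, and semi-univalence holds because an equivalence over \(C\) forces the fibres to be equinumerous and hence the classifying maps to be equal. The only difference is cosmetic — the paper takes \(\U\) to be the quotient of \(B\) by equinumerosity of fibres and uses choice to section the quotient map, whereas you take \(\U\) to be a set of initial ordinals and use choice to identify each fibre with its cardinal.
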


\begin{proof}
  Given a map \(f : A \to B\), we take \(\U\) to be the quotient of \(B\) identifying elements if their fibres have the same cardinality. The quotient map has a section \(s : \U \to B\) by the axiom of choice, and pulling back \(f\) along \(s\) gives the required universe.
\end{proof}

The author expects nonetheless that a semantic version of W\"{a}rn's argument is possible. Note that if we use the ``standard'' interpretation of \(\Sigma\)-types in set theory as sets of ordered pairs we can define the first projection ``uniformly.'' E.g. Using Kuratowski ordered pairs, we always define \(\pi_{0}(z)\) to be \(\bigcup \bigcap z\) independent of which particular \(\Sigma\)-type we are considering. Moreover, for the more standard definition of universe in \(\sets\) as \(V_{\kappa}\) for inaccessible \(\kappa\), we can restrict this uniform projection to an operation on small sets. However, we leave it for future work to find a natural formulation of this property for universes in path categories, to study which examples of path categories it applies to, and to show that under those conditions univalence for contractible types implies that every cofibration is a monomorphism.

The variation of Kraus' paradox due to the author in \cite[Section 6]{SwanPathId} is another example where the requirement of homogeneous domain is dropped, but which turns out to be less general in other ways. For one thing, it requires the very limiting condition that the cofibration \(m : A \to B\) is also an embedding in type theory.

However, the current version of Kraus' paradox is likely not the most powerful. One can naturally ask the following questions, which are all, to the author's knowledge, open problems:
\begin{enumerate}
\item Can we say any more about cofibrations with homogeneous domain, other than that they are monomorphisms?
\item Can we replace the requirement of homogeneity with something weaker? What can we still say if the domain has a rich supply of automorphisms, but not quite enough to get homogeneity?
\item Is there a better formulation of Kraus' paradox for higher inductive types with recursive point constructors, such as \(W\)-types and nullification \cite{rijkeshulmanspitters}?
\end{enumerate}

\subsection{Is Kraus' paradox surprising?}

We can now give another perspective on why Kraus' paradox is a surprising result.

In path categories we have two different notions of injective map. As in any category we can think of monomorphisms as injective maps, following our intuition from many natural examples of categories. The second, arguably more correct definition is maps that are ``homotopy monomorphic,'' i.e. those that factor as a weak equivalence followed by an hProposition. Certainly the latter agrees with the internal statement in type theory that the map is injective when we work in models of type theory.

In extensional path categories the two definitions are equivalent, so there is no ambiguity. For non extensional path categories, we no longer expect the two classes to coincide. For models coming from homotopy theory we have the intuition that propositional truncation no longer identifies points, but instead ``adds new paths.'' From this perspective, it is not surprising that there are at least some examples of path categories where the truncation maps are always monomorphisms, even though truncations are almost never homotopy monomorphisms. In Cisinski model structures, for example, a map is a cofibration if and only if it is monic, by definition \cite[Definition 2.12]{cisinskimodelstr}.

However, just as for extensional path categories, there's no reason that our intuitions about one class of model should apply to every model. It is easy to think of the choice of definition for cofibration as an ``implementation detail'' that is somewhat independent of the logical principles holding in a model. Hence one might na\"{i}vely still expect some flexibility here, with path categories intermediate between extensional ones and ``homotopical'' ones, where univalence holds, but truncation maps do not have to be monic. Kraus' paradox tells us that at least for homogeneous types this is wrong again: we have to take truncation maps with homogeneous domain to be monomorphisms if we want univalence to hold.

\bibliographystyle{alphaurl}
\bibliography{mybib}{}

\end{document}